\def\imod#1{\allowbreak\mkern11mu({\operator@font mod}\,\,#1)}
\renewenvironment{proof}[1][]{\begin{trivlist}
\item[\hspace{\labelsep}{\bf\noindent Proof#1.\/}] }{\qed\end{trivlist}}
\newenvironment{proof-claim}[1][]{\begin{trivlist}
\item[\hspace{\labelsep}{\it\noindent Proof#1.\/}] }{\qed\end{trivlist}}
\theoremstyle{plain}
\newtheorem{theorem}{Theorem}
\Crefname{theorem}{Theorem}{Theorems}
\newtheorem{conjecture}[theorem]{Conjecture}
\Crefname{conjecture}{Conjecture}{Conjectures}
\newtheorem{lemma}[theorem]{Lemma}
\Crefname{lemma}{Lemma}{Lemmas}
\newtheorem{claim}[theorem]{Claim}
\Crefname{claim}{Claim}{Claims}
\Crefname{definition}{Definition}{Definitions}
\newtheorem{proposition}[theorem]{Proposition}
\Crefname{proposition}{Proposition}{Propositions}
\Crefname{observation}{Observation}{Observations}
\Crefname{corollary}{Corollary}{Corollaries}
\Crefname{remark}{Remark}{Remarks}
\newtheorem{example}[theorem]{Example}
\Crefname{example}{Example}{Examples}
\Crefname{construction}{Construction}{Constructions}
\newcommand{\ceil}[1]{\lceil #1 \rceil}
\newcommand{\one}{\mathbbm{1}}
\DeclareMathOperator*{\PP}{\mathbb{P}}
\DeclareMathOperator*{\R}{\mathcal{R}}
\DeclareMathOperator*{\B}{\mathcal{B}}
\newcommand{\comp}[1]{\overline{#1}}
\begin{document}
\title{Partitioning a graph into monochromatic connected subgraphs}
\author{Ant\'onio Gir\~ao\thanks{
		Department of Pure Mathematics and Mathematical Statistics, 
		University of Cambridge, Cambridge, UK;
		e-mail: \texttt{A.Girao}@\texttt{dpmms.cam.ac.uk}.
	}
	\and Shoham Letzter\thanks{
        ETH Institute for Theoretical Studies,
        ETH,
        8092 Zurich;
        e-mail: \texttt{shoham.letzter}@\texttt{eth-its.ethz.ch}.
    }
	\and Julian Sahasrabudhe\thanks{
		Department of Mathematics, 
		University of Memphis, 
		Memphis, 
		Tennessee; 
		e-mail: \texttt{julian.sahasra}@\texttt{gmail.com}.
	}}
\maketitle

\begin{abstract}
    \setlength{\parindent}{0in} 
    \setlength{\parskip}{.08in} 
    \noindent
	A well-known result by Haxell and Kohayakawa states that the vertices of
	an $r$-coloured complete graph can be partitioned into $r$ monochromatic
	connected subgraphs of distinct colours; this is a slightly weaker
	variant of a conjecture by Erd\H{o}s, Pyber and Gy\'arf\'as that states
	that there exists a partition into $r-1$ monochromatic connected
	subgraphs.
	We consider a variant of this problem, where the complete graph is
	replaced by a graph with large minimum degree, and prove two conjectures
	of Bal and DeBiasio, for two and three colours. 

    \setlength{\parskip}{.1in} 
\end{abstract}

\section{Introduction} \label{sec:introduction}
	An old observation by Erd\H{o}s and Rado says that when the edges of a 
	complete graph are coloured with two colours, there is a spanning
	monochromatic component. This simple remark has been the starting point
	of extensive research. A natural example is the search for large
	monochromatic components in $r$-edge-coloured complete graph (see, for
	example, \cite{gyarfas,gyarfas-survey}).
	Here we focus on a different direction, namely, the search for covers (or
	partitions) of the vertices into as few as possible monochromatic
	connected subgraphs.
	
	A classical example appears in a seminal paper by Erd\H os, Gy\' arf\' as
	and Pyber \cite{ErdosGyarfasPyber}, who showed that for any $r$-colouring
	of $K_n$ (the complete graph on $n$ vertices) the vertices can be
	partitioned into at most $O(r^{2}\log r)$ monochromatic cycles.
	We note that throughout the paper, when we say that the vertices of a
	graph are covered (or partitioned) by a collection of subgraphs, we
	mean that the vertices are covered by the \emph{vertex sets} of these subgraphs.
	
	Gy\'arf\'as, Ruszink\'o, S\'ark\"ozy and Szemer\'edi \cite{GRSS} improved
	the above result by showing that if the edges of the complete graph are
	$r$-coloured then the vertices can be partitioned into $O(r \log r)$
	monochromatic cycles. In the other direction, Pokrovskiy
	\cite{pokrovskiy} showed that one needs strictly more than $r$ cycles,
	disproving a conjecture of Erd\H{o}s, Gy\' arf\' as and Pyber
	\cite{ErdosGyarfasPyber}. Conlon and Stein \cite{conlon-stein} showed
	similar results for colourings where every vertex is incident with at
	most $r$ distinct colours.  The question of whether one can partition an
	$r$-coloured graph into $O(r)$ monochromatic cycles remains an enticing
	open problem in this area. 
	
	In a slightly different direction, Erd\H{o}s, Gy\' arf\' as and Pyber
	\cite{ErdosGyarfasPyber} conjectured that the vertices of an $r$-coloured
	complete graph may be partitioned into at most $r-1$ monochromatic
	connected subgraphs. 
	This conjecture
	is known to be tight when $r-1$ is a prime power and $n$ is sufficiently
	large, due to a well-known construction which requires the existence of
	an affine plane of an appropriate order. Haxell and Kohayakawa
	\cite{haxell-kohayakawa} proved a slightly weaker result, showing that
	one can partition an $r$-coloured complete graph on $n$ vertices into $r$
	monochromatic subgraphs, for sufficiently large $n$.

	Interestingly, this problem is closely related to a well-known conjecture
	of Ryser on packing and covering edges in $r$-partite, $r$-uniform
	hypergraphs. This link was first noted by Gy\'{a}rf\'{a}s \cite{gyarfas}
	in 1997 and leads to the following natural formulation of the conjecture
	of Ryser, published in \citep{ryser}, where
	$\alpha(G)$ is the size of the largest independent set in the graph $G$.	
	\begin{conjecture}[Ryser (see \cite{ryser})]
		\label{conj:ryser-lovasz}
		The vertex set of an $r$-coloured graph $G$ can be covered by at most
		$(r-1)\alpha(G)$ monochromatic connected subgraphs.
	\end{conjecture}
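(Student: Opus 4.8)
The statement is Ryser's conjecture, whose general case is a central open problem, so my plan is to set up the standard equivalence with the hypergraph formulation, settle the cases that are genuinely within reach, and be honest about where the argument stalls. The first step is the reduction. Given an $r$-colouring of $G$, every vertex $v$ lies in exactly one monochromatic component of each colour, so I would map $v$ to the $r$-tuple of components containing it and form an $r$-partite $r$-uniform hypergraph $H$: its $r$ parts are the sets of monochromatic components of each colour, and its hyperedges are these $r$-tuples. A transversal (vertex cover) of $H$ is precisely a family of monochromatic components covering $V(G)$, so the minimum number of components needed equals $\tau(H)$. Conversely, any $r$-partite $r$-uniform hypergraph arises from such a colouring (take the hyperedges of $H$ as the vertices of $G$, joining two hyperedges in colour $i$ when they agree on part $i$), so the statement is equivalent to Ryser's hypergraph conjecture $\tau(H) \le (r-1)\nu(H)$.

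The observation that links the two parameters is that $\nu(H) \le \alpha(G)$. A matching in $H$ is a collection of vertices of $G$, no two of which share a monochromatic component in any colour; but if two such vertices were adjacent in $G$, the colour of the connecting edge would place them in a common monochromatic component, a contradiction. Hence a matching in $H$ is an independent set in $G$, so $\nu(H) \le \alpha(G)$, and the whole problem reduces to bounding $\tau(H)$ by $(r-1)\nu(H)$.

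For $r = 2$ this is immediate: $H$ is a bipartite (multi)graph, and K\"onig's theorem gives $\tau(H) = \nu(H) \le \alpha(G) = (r-1)\alpha(G)$. For $r = 3$ I would invoke Aharoni's theorem that Ryser's conjecture holds for tripartite $3$-uniform hypergraphs, $\tau(H) \le 2\nu(H)$; its proof runs through the Aharoni--Haxell topological Hall theorem together with a lower bound on the topological connectivity of the independence complex of the relevant line graph, and combined with $\nu(H) \le \alpha(G)$ this yields the desired bound $2\alpha(G)$.

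The main obstacle is the case $r \ge 4$, where the conjecture is wide open. The topological route caps out because the connectivity estimates needed to run a deficiency version of Hall's theorem are no longer available in the strength $(r-1)$ demands, and the fractional relaxation only delivers weaker constants. Even the boundary case $\alpha(G) = 1$, where $G$ is complete and the target is $r-1$ monochromatic components, is the Erd\H{o}s--Gy\'arf\'as--Pyber conjecture, for which the Haxell--Kohayakawa result cited above gives only $r$. I therefore expect to be able to present a complete proof only for $r \le 3$; for larger $r$ the honest conclusion is that a genuinely new idea, beyond the component-hypergraph reduction and the known topological connectivity bounds, appears to be required.
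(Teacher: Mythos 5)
The statement you were asked to prove is not proved in the paper at all: it is Ryser's conjecture, which the paper states purely as context and explicitly describes as open in general, so there is no proof of it to compare yours against. Your write-up handles this situation correctly, and your approach coincides with the paper's own framing. The reduction you set up --- mapping each vertex of $G$ to the $r$-tuple of monochromatic components containing it, so that transversals of the resulting $r$-partite $r$-uniform hypergraph $H$ are exactly covers of $V(G)$ by monochromatic components, and matchings in $H$ give independent sets in $G$, whence $\nu(H) \le \alpha(G)$ --- is precisely the link the paper attributes to Gy\'arf\'as \cite{gyarfas}. The cases you settle are exactly those the paper cites as known: $r = 2$ via K\"onig's theorem and $r = 3$ via Aharoni's theorem \cite{aharoni}, which builds on Aharoni--Haxell \cite{aharoni-haxell}. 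Two minor points. First, in your reverse construction (hypergraph to coloured graph) it is worth noting that "agreeing on part $i$" is an equivalence relation on hyperedges, since each hyperedge meets part $i$ in exactly one vertex; this is what makes colour-$i$ components correspond to stars at part-$i$ vertices and makes the equivalence, not just one implication, go through. Second, the paper records slightly more than you do in the complete-graph case: for $\alpha(G) = 1$ the conjecture is known up to $r \le 5$, by Gy\'arf\'as ($r=3$), Duchet and Tuza ($r=4$), and Tuza ($r=5$) \cite{duchet,tuza}, which goes beyond the Haxell--Kohayakawa bound of $r$ components that you quote. Your conclusion that $r \ge 4$ in general would require genuinely new ideas is accurate and is consistent with the paper's own assessment.
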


	In this form, it is clear that Ryser's conjecture implies the
	\textit{covering version} of the aforementioned conjecture of Erd\H{o}s,
	Gy\'arf\'as and Pyber about monochromatic connected subgraphs. 
	Although not much is known
	about Ryser's conjecture in general, a few special cases are understood. The
	case $r=2$ is equivalent to  K\"onig's classical theorem (see
	\cite{Diestel}, for example), while the case $r=3$ was
	proved by Aharoni \cite{aharoni} in 2001, who built on the earlier advances of
	Aharoni and Haxell \cite{aharoni-haxell}. The conjecture is also known to
	hold 
	for $\alpha(G) = 1$ (i.e.\ $G$ is a complete graph) and $r \le 5$, as was
	proved by Gy\'arf\'as \cite{gyarfas} ($r = 3$), Duchet \cite{duchet} and
	Tuza \cite{tuza} ($r = 4$), and Tuza \cite{tuza} ($r = 5$).
	
	Following Schelp \cite{schelp} who suggested several variants of
	Ramsey-type problems (e.g.\ the determining the length of the longest
	monochromatic path in a $2$-coloured graphs), we consider variants of the
	above problems for graphs with large minimum degree.  Our first main
	result proves a conjecture of Bal and DeBiaso \cite{bal-debiasio} about
	partitioning the vertices of a $2$-coloured graph with large minimum
	degree; recall that $\delta(G)$ denotes the minimum degree of the graph
	$G$. 
	
	\begin{restatable}{theorem}{thmTwoColsPartn} \label{thm:two-cols-partn}
		There exists an integer $n_0$ such that that every $2$-coloured graph
		$G$ on $n \geq n_0$ vertices and with minimum degree at least
		$\frac{2n-5}{3}$ can be partitioned into two monochromatic connected
		subgraphs. 	
	\end{restatable}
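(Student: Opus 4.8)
The plan is to analyse the two colour classes through their connected components. Write the colours as red and blue, let $R_1,\dots,R_s$ be the vertex sets of the components of the red graph and $B_1,\dots,B_t$ those of the blue graph, so that every vertex lies in exactly one $R_i$ and one $B_j$. First I dispose of the easy regimes. If the red graph is connected (that is, $s=1$) then, picking a non-cut vertex $v$ of it, the sets $V\setminus\{v\}$ and $\{v\}$ give the required partition, since a single vertex is a monochromatic connected subgraph; the case $t=1$ is symmetric. If $s=2$, then $R_1,R_2$ is already a partition into two (red) connected subgraphs, and similarly if $t=2$. Hence from now on I may assume $s\ge 3$ and $t\ge 3$, and this is precisely the regime in which the minimum degree hypothesis must be used.

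The key step is to show that the hypothesis forces a red component $R^\ast$ and a blue component $B^\ast$ with $R^\ast\cup B^\ast=V$; I call such a pair a \emph{covering pair}. The starting observation is that every neighbour of a vertex $v\in R_i\cap B_j$ lies in $R_i\cup B_j$ (a red neighbour lies in $R_i$, a blue one in $B_j$), so
\[
 |R_i|+|B_j|-|R_i\cap B_j| \;=\; |R_i\cup B_j| \;\ge\; \delta(G)+1 \;\ge\; \tfrac{2n-2}{3}
 \qquad\text{whenever } R_i\cap B_j\neq\emptyset .
\]
Consequently, if there were three non-empty cells $R_{i_k}\cap B_{j_k}$ ($k=1,2,3$) lying in pairwise distinct rows and columns, then summing these three inequalities and using $\sum_k|R_{i_k}|\le n$, $\sum_k|B_{j_k}|\le n$ and $|R_{i_k}\cap B_{j_k}|\ge 1$ would give $2n-3\ge 2n-2$, a contradiction. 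Thus no such triple exists, so in the bipartite incidence graph $\Gamma$ whose edges are the non-empty cells $R_i\cap B_j$ the maximum matching has size at most $2$. A matching of size $1$ would force all non-empty cells into a single row or column, contradicting $s,t\ge3$; so $\Gamma$ has matching number exactly $2$ and, by K\"onig's theorem, a vertex cover of size $2$. Such a cover cannot consist of two rows (that would leave only two non-empty rows, i.e.\ $s\le 2$) nor of two columns; hence it consists of one row $i_0$ and one column $j_0$, which says exactly that every non-empty cell meets $R_{i_0}$ or $B_{j_0}$, i.e.\ $R_{i_0}\cup B_{j_0}=V$. This is the covering pair, and I note that the constant $\tfrac{2n-5}{3}$ is calibrated precisely so that the inequality $2n-3\ge2n-2$ fails.

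It remains to turn a covering pair $R^\ast,B^\ast$ into a genuine partition. Set $A=R^\ast\setminus B^\ast$, $C=B^\ast\setminus R^\ast$ and $D=R^\ast\cap B^\ast$. The cover structure gives strong control: every blue component other than $B^\ast$ lies inside $R^\ast$ and every red component other than $R^\ast$ lies inside $B^\ast$; consequently $A$ and $C$ are non-adjacent, every $A$--$D$ edge is red, and every $C$--$D$ edge is blue. Moreover $|A|=n-|B^\ast|\le\tfrac{n+2}{3}$ and symmetrically $|C|\le\tfrac{n+2}{3}$, so $D$ is large, $|D|\ge\tfrac{n-4}{3}$, and a short degree count shows that every vertex of $A$ has at least $\tfrac{n-4}{3}$ red neighbours in $D$ (and every vertex of $C$ at least $\tfrac{n-4}{3}$ blue neighbours in $D$). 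The goal is then to split $D=D_1\sqcup D_2$ so that $A\cup D_1$ is red-connected and $C\cup D_2$ is blue-connected; since each vertex of $A$ attaches to $D$ in red and each vertex of $C$ to $D$ in blue, it suffices to arrange that $D_1$ red-dominates $A$ and induces a red-connected graph, while $D_2$ blue-dominates $C$ and induces a blue-connected graph.

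I expect this last splitting step to be the main obstacle: the red and blue graphs induced on $D$ need not themselves be connected, so one cannot simply take $D_1=D$ or $D_2=D$, and the two connectivity requirements compete for the vertices of $D$. The tools to resolve it are the large red/blue neighbourhoods into $D$ together with the density coming from $\delta(G)\ge\tfrac{2n-5}{3}$. Concretely, I would first try the two extreme splits ($V_1=R^\ast,\ V_2=C$ and $V_1=A,\ V_2=B^\ast$), which already succeed unless both $C$ is blue-disconnected and $A$ is red-disconnected, and then handle that remaining case by growing a red-connected core of $D$ that covers the red neighbourhoods of $A$ and verifying, via the minimum-degree bound, that its complement still blue-connects $C$. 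Establishing that such compatible cores always exist is where the quantitative hypothesis does its real work in the endgame.
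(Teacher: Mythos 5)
Your argument up to and including the covering pair is correct and clean: the easy cases $s,t\le 2$, the inequality $|R_i\cup B_j|\ge\delta(G)+1$ for every non-empty cell, the three-cell summation giving the contradiction $2n-3\ge 2n-2$, and the K\"onig step producing a red component $R^\ast$ and a blue component $B^\ast$ with $R^\ast\cup B^\ast=V(G)$ all check out. This part is, in essence, the $t=2$ case of \Cref{prop:two-colouring}, which the paper proves by the same Gy\'arf\'as--K\"onig link (with a slightly different bound on the matching number). The structural consequences you then extract --- $A$ and $C$ non-adjacent, all $A$--$D$ edges red, all $C$--$D$ edges blue, $|A|,|C|\le (n+2)/3$, and the degree bounds into $D$ --- are also correct.

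The gap is the final step, the splitting of $D$ into $D_1\sqcup D_2$, and it is not a technicality: it is the entire content of \Cref{thm:two-cols-partn}, since the covering version holds with the same degree bound and is easy, while the partition version is what requires work. Concretely, your plan has to handle vertices $d\in D$ with no neighbour at all in $A\cup C$ (possible whenever $|D|\ge \delta(G)+1$): all $\ge(2n-5)/3$ of their edges lie inside $D$ with arbitrary colours, so such vertices can only be attached to one side through chains of other $D$-vertices, and which chains survive depends on the very split being constructed --- this is exactly where the two connectivity requirements compete. Note also that red-domination of $A$ by $D_1$ does not suffice for $A\cup D_1$ to be red-connected, since $A$ may fall into red clusters attached to disjoint parts of $D_1$. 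The paper devotes essentially its whole proof to this difficulty: it builds a red dominating set (\Cref{claim:red-star}), absorbs the unattached vertices via the forward-neighbour sequence defining $\comp{N}$, and then needs randomised splittings and a sequence of further claims (\Cref{claim:comp-N-small,claim:bigRedVertex,claim:third-vx,claim:distinct-compts}), ending with a three-vertex independent set argument --- all of which is why the theorem carries the hypothesis $n\ge n_0$. Your proposal never uses large $n$ anywhere, and the closing paragraph ("growing a red-connected core of $D$ \ldots\ verifying \ldots\ that its complement still blue-connects $C$") restates the goal rather than proving it. As it stands, you have proved the covering statement, set up a genuinely nice reduction, but not proved the partition statement.
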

	
	We note that this is a generalisation of the result by Haxell and
	Kohayakawa \cite{haxell-kohayakawa} mentioned above for two colours,
	where the complete graph is replaced by a graph with large minimum
	degree.  This result is seen to be sharp by a construction of Bal and
	DeBiasio \cite{bal-debiasio}; in \Cref{sec:conclusion} we describe a more
	general family of examples which shows, in particular, the sharpness of
	the minimum degree condition in this result. One can think of this result
	as saying that $\frac{2n-5}{3}$ is the minimum degree `threshold' that
	guarantees a partition of every 2-colouring into two monochromatic
	connected subgraphs. It is therefore natural to ask what minimum degree
	condition on a graph $G$ guarantees a partition into $t$ monochromatic
	connected subgraphs, no matter how the graph is $2$-coloured. We conjecture the
	following.

	\begin{restatable}{conjecture}{conjPartitioningTwoColours} 
		\label{conj:partitioning-two-colours}
		For every $t$ there exists $n_0$, such that for every
		$2$-colouring of a graph $G$ on $n\geq n_0$ vertices with
		$\delta(G)\geq \frac{2n-2t-1}{t+1}$ there exists a partition of
		the vertex set into at most $t$ monochromatic connected subgraphs.
	\end{restatable}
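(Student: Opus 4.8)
The plan is to induct on $t$, taking \Cref{thm:two-cols-partn} as the base case $t=2$ (its hypothesis $\delta(G)\ge \frac{2n-5}{3}$ is exactly the $t=2$ instance of the degree bound, since $\frac{2n-2t-1}{t+1}=\frac{2n-5}{3}$). For the inductive step I would assume the statement for $t-1$ and, given a $2$-coloured $G$ on $n$ vertices with $\delta(G)\ge\frac{2n-2t-1}{t+1}$, try to split off a single monochromatic connected subgraph $H$ so that $G':=G-V(H)$ satisfies the hypothesis for $t-1$; applying induction to $G'$ then yields a partition of $V(G')$ into $t-1$ monochromatic connected subgraphs, and together with $H$ this gives the desired partition of $V(G)$ into $t$ pieces.

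The heart of the matter is controlling the minimum degree after removal. Writing $s=|V(H)|$, a vertex $v\in V(G')$ loses exactly $|N_G(v)\cap V(H)|$ from its degree, so to guarantee $\delta(G')\ge\frac{2(n-s)-2(t-1)-1}{t}$ it suffices that every $v\notin V(H)$ have few neighbours in $V(H)$; a short computation shows the precise requirement is
\[
 |N_G(v)\cap V(H)|\ \le\ \frac{2s(t+1)-2n-1}{t(t+1)}\qquad\text{for all }v\notin V(H).
\]
In particular $H$ must be \emph{large}, of order $s>\tfrac{n}{t+1}$, and simultaneously \emph{non-expanding}, in the sense that the vertices outside $H$ send almost all of their non-neighbourhoods into $H$. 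This is the opposite of the naive instinct to peel off a small component, and it is why a direct removal argument that ignores the structure of $G$ cannot work: removing an arbitrary monochromatic connected subgraph costs up to $s$ in degree, which the bound above never affords.

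To produce such an $H$ I would analyse the monochromatic component structure of $G$. Each $v$ lies in a monochromatic component of size at least $\tfrac12\deg_G(v)\ge\tfrac{n}{t+1}-1$ in its majority colour, so large components are plentiful; the task is to find one whose vertex set can be enlarged to a monochromatic connected set $H$ that captures the non-neighbourhoods of the remaining vertices. I expect this to split into a stability dichotomy: if some monochromatic component is genuinely dominant (say of size close to $\tfrac{2n}{t+1}$ or more) one peels it off with degree to spare and the induction runs comfortably; otherwise the colouring is forced close to the extremal family described in \Cref{sec:conclusion}, where the vertex set breaks into roughly $t+1$ nearly equal blocks with a rigid colouring between them, and this near-extremal case should be handled by hand, essentially reading off the partition from the block structure.

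The step I expect to be the main obstacle is exactly this choice of $H$ in the near-extremal regime. When $G$ resembles the sharpness construction, every vertex distributes its non-neighbourhood across several blocks, so no single monochromatic component is both large enough and non-expanding enough to satisfy the displayed inequality; one must instead combine one block with part of another, or track which low-degree vertices to absorb into $H$, while preserving monochromatic connectivity of $H$. Making this absorption precise — and verifying that the residual graph meets the $(t-1)$-threshold exactly rather than approximately, since the bound leaves no asymptotic slack — is where the real work lies, and is presumably the reason the conjecture is currently known only for the smallest values of $t$.
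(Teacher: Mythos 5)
First, a point of orientation: \Cref{conj:partitioning-two-colours} is stated in the paper as a \emph{conjecture}, and the paper does not prove it. The authors establish only the case $t=2$ (\Cref{thm:two-cols-partn}) and the weaker \emph{covering} analogue (\Cref{prop:two-colouring}), the latter by an entirely different route: an auxiliary bipartite graph whose two sides are the red components and the blue components, an argument that a maximum matching in it has size at most $t$, and an appeal to K\"onig's theorem. That argument yields covers for all $t$ but gives no control over disjointness, which is the whole difficulty in the partition version. So there is no paper proof to compare yours against, and your proposal would have to stand as a complete argument on its own. It does not: it is an inductive framework whose decisive step is missing, as you yourself concede.

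The gap is concrete. Your computation of the removal condition is correct: to run the induction you need a monochromatic connected $H$ on $s$ vertices with $|N_G(v)\cap V(H)|\le \frac{2s(t+1)-2n-1}{t(t+1)}$ for every $v\notin V(H)$, forcing $s>\frac{n}{t+1}$. But in the near-extremal colourings --- precisely the ones that matter, since the threshold has no slack --- no monochromatic component satisfies this. In \Cref{ex:cover-t}, a component of the form $A_i\cup A_{i+1}$ has $s\approx \frac{2n}{t+1}$, so your inequality permits only about $\frac{2n}{t(t+1)}$ neighbours inside $H$, yet every vertex of the adjacent block $A_{i+2}$ has roughly $\frac{n}{t+1}$ neighbours in $A_{i+1}\subseteq H$, exceeding the allowance for every $t\ge 3$. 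Your proposed remedy is a ``stability dichotomy'' (either some component is dominant enough to peel off, or the colouring is close to the extremal family, to be handled by hand), but neither half is supplied: no such stability theorem is known, and the near-extremal analysis you defer is essentially the content of the conjecture itself. Until that step is filled in, the proposal is a plan, not a proof --- which is consistent with the statement remaining open in the paper for all $t\ge 3$.
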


	We support this conjecture by observing an analogous result for
	\emph{covers} of the vertices by monochromatic components. 

	\begin{proposition}\label{prop:two-colouring}
		Let $t$ be integer and let $G$ be a $2$-coloured graph on $n$
		vertices with $\delta(G) \geq \frac{2n-2t-1}{t+1}$. Then vertices of
		$G$ can be covered by at most $t$ monochromatic components.
	\end{proposition}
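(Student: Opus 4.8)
The plan is to reduce the statement to K\"onig's theorem by exploiting the component structure of the two colour classes. Write $G = R \cup B$, where $R$ and $B$ are the subgraphs formed by the red and blue edges, and let $\mathcal{A} = \{A_1, \dots, A_p\}$ and $\mathcal{B} = \{B_1, \dots, B_q\}$ be the partitions of $V(G)$ into the connected components of $R$ and of $B$ respectively, counting isolated vertices as singleton components. Every vertex then lies in exactly one $A_i$ and one $B_j$, hence in a unique nonempty \emph{cell} $A_i \cap B_j$, and covering $V(G)$ by monochromatic components is precisely the task of choosing a subfamily of the $A_i$'s and $B_j$'s that meets every nonempty cell.

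The first key step is a structural observation about the cells. If $v \in A_i \cap B_j$, then every neighbour of $v$ lies in $A_i \cup B_j$: a red edge at $v$ keeps us inside its red component $A_i$, and a blue edge keeps us inside its blue component $B_j$. Since $N(v) \subseteq (A_i \cup B_j) \setminus \{v\}$ and $|A_i \cap B_j| \ge 1$, this gives $\deg(v) \le |A_i| + |B_j| - 2$. Using $\deg(v) \ge \delta(G)$, I conclude that every nonempty cell satisfies $|A_i| + |B_j| \ge \delta(G) + 2$.

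Next I would recast the covering problem as a vertex-cover problem. Form the bipartite graph $H$ with parts $\mathcal{A}$ and $\mathcal{B}$, placing an edge $A_i B_j$ exactly when the cell $A_i \cap B_j$ is nonempty. A family of monochromatic components covering $V(G)$ is then exactly a vertex cover of $H$, so by K\"onig's theorem it suffices to show that $H$ has no matching of size $t+1$. This reformulation is where I expect the only real idea to lie; once it is in place, the remainder is a short double count.

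For the final step, suppose towards a contradiction that $H$ contained a matching $A_{i_1} B_{j_1}, \dots, A_{i_{t+1}} B_{j_{t+1}}$. The row indices $i_1, \dots, i_{t+1}$ are distinct, so the sets $A_{i_\ell}$ are pairwise disjoint and $\sum_\ell |A_{i_\ell}| \le n$; likewise $\sum_\ell |B_{j_\ell}| \le n$. Summing the cell inequality over the matching yields $(t+1)(\delta(G) + 2) \le \sum_\ell (|A_{i_\ell}| + |B_{j_\ell}|) \le 2n$, so $\delta(G) \le \frac{2n - 2t - 2}{t+1}$, contradicting the hypothesis $\delta(G) \ge \frac{2n - 2t - 1}{t+1}$. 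Thus $H$ has maximum matching at most $t$, and K\"onig's theorem supplies a vertex cover of size at most $t$, i.e.\ at most $t$ monochromatic components covering $V(G)$. The routine double count is exactly what pins down the stated constant, so the main thing to get right is matching the extremal arithmetic to the $\frac{2n-2t-1}{t+1}$ threshold rather than any conceptual difficulty.
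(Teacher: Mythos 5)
Your proposal is correct and takes essentially the same route as the paper: both reduce the covering problem to finding a small vertex cover of the bipartite intersection graph of red components versus blue components, invoke K\"onig's theorem, and rule out a matching of size $t+1$ by a double count that boils down to the same arithmetic, namely $(t+1)(\delta(G)+2) \le 2n$, contradicting $\delta(G) \ge \frac{2n-2t-1}{t+1}$. The only difference is cosmetic: the paper picks a transversal $u_\ell \in R_\ell \cap B_\ell$ of the matched cells, notes it is an independent set, and counts edges between it and the rest of the graph (at least $(t+1)\delta(G)$, at most $2(n-t-1)$), whereas you sum the cell inequality $|A_{i_\ell}| + |B_{j_\ell}| \ge \delta(G)+2$ over the matching and use disjointness of the components; both rest on the same local fact that all edges at a vertex of $A_i \cap B_j$ stay inside $A_i \cup B_j$.
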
	
	
	We also give a construction, showing that the inequality in this
	proposition (and therefore the conjecture) cannot be improved.

	Bal and DeBiasio \cite{bal-debiasio} also considered the problem of
	covering coloured graphs with monochromatic components of distinct
	colours. In particular, they conjectured the following.
	\begin{restatable}{conjecture}{conjBalDebiasioDcover}
		\label{conj:bal-debiasio-dcover}
			Let $G$ be an $r$-coloured graph on $n$ vertices with $\delta(G)
			\ge (1-1/2^r)n$. Then the vertices can be covered by
			monochromatic components of distinct colours.
	\end{restatable}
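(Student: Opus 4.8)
The plan is to induct on the number of colours $r$. The base case $r = 1$ is immediate: here $G$ carries a single colour and $\delta(G) \ge (1 - 1/2)n = n/2$, so $G$ is connected (any vertex in a component of size at most $n/2$ would have degree at most $n/2 - 1$), and the one colour class covers $V$. For $r \ge 2$ the natural strategy is to \emph{peel off} a single monochromatic component and reduce to the $(r-1)$-colour case. If some colour is spanning and connected we are done with one component, so I would assume otherwise and split into cases according to the size of the largest monochromatic component.

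Concretely, suppose we can find a colour $c$ and a colour-$c$ component $C$ with $|C| \le n/2$ such that, after discarding all colour-$c$ edges, the induced graph on $U \defeq V \setminus C$ is a genuinely $(r-1)$-coloured graph on $m \defeq |U|$ vertices satisfying the induction hypothesis. The threshold $1 - 1/2^r$ is tuned exactly for this peeling: for $v \in U$ we have $\deg_U(v) \ge (1 - 1/2^r)n - |C|$, and a one-line computation gives
\[
(1 - 1/2^r)n - |C| - \bigl(1 - 1/2^{r-1}\bigr)(n - |C|) = \frac{n - 2|C|}{2^r} \ge 0
\]
whenever $|C| \le n/2$. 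Thus, as long as no colour-$c$ edge survives inside $U$, the graph on $U$ has relative minimum degree at least $1 - 1/2^{r-1}$, so by induction $U$ is covered by monochromatic components of distinct colours from the remaining $r-1$ colours; adjoining $C$ (of the fresh colour $c$) completes a distinct-colour cover of $V$.

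The hard part will be producing such a $C$, and this is precisely where the difficulty concentrates, which is why only $r \le 3$ seems within reach. The obstruction is that peeling the colour-$c$ component $C$ can leave \emph{other} colour-$c$ components inside $U$; discarding their edges lowers degrees and destroys the inductive hypothesis. The slack $\frac{n - 2|C|}{2^r}$ vanishes exactly in the balanced regime where the largest components have size about $n/2$, and the extremal $2$-coloured example (a $2 \times 2$ grid of cells of size $n/4$, realising minimum degree just below $3n/4$, where no single red component together with a single blue component covers all four cells) shows this failure is genuine. So the induction cannot be pushed through by a soft argument in the boundary case.

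To handle small $r$ I would therefore analyse the component structure directly rather than recursing blindly. For $r = 2$, regard $V$ as partitioned into the \emph{cells} $R_i \cap B_j$ cut out by the red and blue components; any edge is red (hence within a row) or blue (hence within a column), so two vertices lying in cells sharing neither a red nor a blue component are non-adjacent. The minimum-degree bound then confines, for each cell, all but fewer than $n/4$ vertices to the union of one row and one column, and a short extremal argument upgrades this to show some row-plus-column cross has empty complement, i.e.\ a single red component and a single blue component cover $V$. For $r = 3$ the same philosophy applies, but the cell partition becomes three-dimensional and the simultaneous control of three colour-partitions is far more delicate; this three-colour cell analysis is the principal technical obstacle, and extending it to general $r$ is what keeps the full conjecture open.
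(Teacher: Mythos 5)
There is a genuine gap --- in fact, what you have written is a research plan rather than a proof, and you largely say so yourself. Two separate problems. First, the statement you are addressing is a \emph{conjecture}: the paper proves it only for $r=2$ and $r=3$, and the general case is open. Your inductive peeling argument, if it worked, would settle the full conjecture; but it does not work, for exactly the reason you concede. The induction step requires a colour-$c$ component $C$ with $|C|\le n/2$ such that \emph{no} colour-$c$ edge survives inside $U=V\setminus C$, i.e.\ $C$ must be the unique colour-$c$ component meeting $U$'s neighbourhoods --- equivalently, colour $c$ must have essentially all its edges inside one component. Nothing guarantees this, and when colour $c$ has several components you must delete its edges inside $U$, destroying the degree bound; the slack $(n-2|C|)/2^r$ you computed gives no room for this. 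So the induction reduces the conjecture to a statement that is no easier than the conjecture itself. (There is also no way to repair it by recolouring the stray colour-$c$ edges inside $U$, since recoloured edges would create spurious connectivity in the other colours.)

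Second, the cases $r=2,3$, which are the only cases actually proved anywhere, are not carried out in your proposal. For $r=2$ you assert that ``a short extremal argument upgrades this'' to a row-plus-column cover, without giving it; for $r=3$ you explicitly label the three-dimensional cell analysis ``the principal technical obstacle'' and stop. For comparison, the paper's proofs go differently: for $r=2$ it first shows that some monochromatic component has order greater than $n/2$ (any two vertices of a small red component share a blue neighbour), and then covers the rest with one component of the other colour; for $r=3$ it proves a chain of intersection claims (three components of distinct colours meeting in $\ge n/8$ vertices suffice; two of distinct colours meeting in $\ge n/4$ suffice; one component of order $\ge n/2$ suffices), and then produces two large components of distinct colours, of order at least $3n/8$, by a Cauchy--Schwarz double-counting argument over a cut spanned by no yellow edge. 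Your cell-decomposition idea is a reasonable alternative starting point for $r=2$, but as written neither the general induction nor the small cases constitute a proof of anything.
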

	
	Again, Bal and DeBiasio provided examples showing that if true, the bound
	$(1-2^{-r})n$ is best possible. We shall prove
	\Cref{conj:bal-debiasio-dcover} for $r = 2,3$. The case $r = 3$ is the
	most interesting case but we include a short proof of $r = 2$ for
	completeness. 
	
	\begin{restatable}{theorem}{thmCoverDistinctThree}
		\label{thm:cover-distinct-three}
		Let $G$ be a $3$-coloured graph on $n$ vertices with $\delta(G) \ge
		7n/8$. Then the vertices of $G$ can be covered by monochromatic
		components of distinct colours.
	\end{restatable}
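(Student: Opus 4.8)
The plan is to assemble the required cover from the three monochromatic components passing through a single, carefully chosen vertex. Write the colours as $1,2,3$, write $c$ for the colouring, let $N_i(v)$ be the colour-$i$ neighbourhood of $v$, and let $C_i(v)$ denote the component of the colour-$i$ subgraph that contains $v$. Since every colour-$i$ edge at $v$ keeps its other endpoint inside $C_i(v)$, we have $C_1(v)\cup C_2(v)\cup C_3(v)\supseteq N[v]$; and as $\delta(G)\ge 7n/8$ leaves $v$ with fewer than $n/8$ non-neighbours, the three components $C_1(v),C_2(v),C_3(v)$ — which have distinct colours and are therefore a legal choice — already cover all but at most $n/8$ vertices. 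Call $v$ \emph{good} if, in addition, every non-neighbour of $v$ lies in $C_1(v)\cup C_2(v)\cup C_3(v)$. Thus it suffices to exhibit a single good vertex.

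Suppose, for contradiction, that no vertex is good, so every $v$ has a \emph{bad} partner: a non-neighbour $u$ with $u\notin C_1(v)\cup C_2(v)\cup C_3(v)$, i.e.\ $u$ and $v$ lie in different components of \emph{every} colour. The main leverage is the codegree bound $|N(u)\cap N(v)|\ge 2\cdot 7n/8-n = 3n/4$. Fix a bad pair $(u,v)$ and set $S=N(u)\cap N(v)$, so $|S|\ge 3n/4$. For each $w\in S$ the two-edge path $u\,w\,v$ cannot be monochromatic, hence $c(uw)\ne c(vw)$; splitting $S$ according to the colour $c(vw)$ gives $S=S_1\cup S_2\cup S_3$ with $S_i\subseteq C_i(v)$ and, for $w\in S_i$, $c(uw)\in\{1,2,3\}\setminus\{i\}$. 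Viewing the same decomposition from $u$ shows $N_i(u)\cap S$ is disjoint from $S_i$ for each $i$, and colour-disconnectedness of $u,v$ further forces $N_i(u)\cap N_i(v)=\emptyset$ for every $i$. In words, on the large common neighbourhood the colouring seen from $u$ is an edgewise derangement of the one seen from $v$.

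The crux — and where I expect the real work to lie — is to turn these local obstructions into a global contradiction. I would aim to show that a bad pair forces an almost rigid structure: the classes $S_1,S_2,S_3$ must each be sizeable and the colour $c(u\,\cdot\,)$ must be essentially constant on each of them, since otherwise a short monochromatic detour $u\,w\,w'\,v$ (with $w,w'\in S$) would reconnect $u$ to $v$. Pushing this far enough should pin the colouring down to a blow-up of the extremal configuration of Bal and DeBiasio, whose minimum degree is strictly below $7n/8$, so that the strict inequality yields the contradiction. The two technical hurdles I anticipate are (i)~ruling out \emph{longer} monochromatic $u$–$v$ paths, not merely the length-two and length-three ones, which means controlling how the components $C_i(v)$ and $C_i(u)$ meet across $S$; and (ii)~making the rigidity quantitative, so that a single edge beyond the extremal threshold already produces a good vertex. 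As a fallback, should the global comparison prove unwieldy, I would instead choose $v$ to minimise the number of its non-neighbours lying outside $C_1(v)\cup C_2(v)\cup C_3(v)$ and use a local switching step — replacing $v$ by a suitable neighbour, or enlarging one chosen component along a bad vertex — to strictly decrease this count, contradicting minimality unless it is already zero. Either route ultimately reduces the residual pieces to a two-coloured problem on large neighbourhoods, so I would keep the $r=2$ case of the result in reserve to finish them off.
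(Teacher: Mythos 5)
Your setup is correct as far as it goes: the notion of a good vertex, the codegree bound $|N(u)\cap N(v)|\ge 3n/4$ for a bad pair, and the observations that $c(uw)\ne c(vw)$ on the common neighbourhood and that $N_i(u)\cap N_i(v)=\emptyset$ for each $i$ are all valid. But the proof stops exactly where it would have to start: everything after ``The crux'' is a statement of intent (``I would aim to show'', ``should the global comparison prove unwieldy, I would instead''), not an argument. Neither the rigidity claim (that bad pairs everywhere force a blow-up of the Bal--DeBiasio configuration) nor the fallback switching argument is carried out, and the rigidity you would need is extremely delicate: the extremal example has minimum degree only about $1$ below $7n/8$, so your stability statement would have to be sharp essentially to within a single vertex. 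As it stands this is a plan whose hard part is missing, not a proof.

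There is also a structural worry about the plan itself. You reduce the theorem to the strictly stronger assertion that some vertex $v$ is good, i.e.\ that $C_1(v)\cup C_2(v)\cup C_3(v)=V(G)$ --- a cover by three components through a \emph{common vertex}. The theorem does not promise this, and the paper's proof does not deliver it: in \Cref{claim:one-intersection}, for instance, the cover produced can consist of a red component $R$ of order at least $n/2$ together with the blue and yellow components of a vertex $u\notin R$, and nothing there forces the triple intersection to be nonempty. So before deriving any contradiction from ``every vertex is bad'' you would first have to show that this assumption is genuinely inconsistent with $\delta(G)\ge 7n/8$, i.e.\ that good vertices must exist at this threshold --- a statement that does not follow from the theorem itself and might conceivably be false, in which case no amount of work could complete your argument. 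The paper proceeds quite differently: it first shows (\Cref{claim:large-compt}, via a Cauchy--Schwarz double count across a partition with no yellow crossing edges) that some two colours admit components of order at least $3n/8$, and then plays the intersection claims (\Cref{claim:three-intersection,claim:two-intersection,claim:one-intersection}) off against one another to either produce a cover or shrink one of these large components below $3n/8$, a contradiction. If you want to rescue your approach, the first task is to prove (or refute) the existence of a good vertex under the stated degree condition; the second is to make the rigidity step quantitative. Both are substantial gaps, not details.
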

	
	\subsection{Structure of the paper}

		We conclude the introduction with a description of the notation that
		we shall use in this paper.
		We prove
		\Cref{thm:two-cols-partn} in \Cref{sec:partition-two-colours}, and
		prove \Cref{thm:cover-distinct-three} in
		\Cref{sec:cover-distinct-colours}. We conclude the paper in
		\Cref{sec:conclusion} with some final remarks and open problems and
		give a proof of \Cref{prop:two-colouring}.

	\subsection{Notation}\label{subsec:NotAndCons}

		By an \emph{$r$-coloured} graph, we mean a graph whose edges are
		coloured with $r$ colours.  When a graph is $2$-coloured we call the
		colours \emph{red} and \emph{blue}; and when it is $3$-coloured, we
		denote the colours by \emph{red}, \emph{blue} and \emph{yellow}.  
		
		For a set of vertices $W$, we denote by $N_r(W)$ the set of vertices
		in $V(G)\setminus W$ that are adjacent to a vertex in $W$ by a red
		edge.  If $x \in V(G)$ is a vertex, we define $d_r(x) = |N_r(\{x\})|$
		which we refer to as the \emph{red degree} of $x$.  We say that $y$
		is a \emph{red neighbour} of $x$ if $xy$ is a red edge.  By a
		\emph{red component} of a graph $G$, we mean the \emph{vertex set} of
		a component in the graph on vertex set $V(G)$ whose edgse are the
		red edges of $G$. We denote the red component that contains $x$ by
		$C_r(x)$.  A \emph{red set} $U$ is a set of vertices that is
		connected in red, i.e.\ the red edges induced by $U$ form a connected
		graph.

		All the above definitions and notation, that were defined for red,
		also works for blue or yellow; e.g.\ $d_b(x)$ and $d_y(x)$ are the
		blue and yellow degrees of $x$, respectively, and a blue set is a set
		of vertices that is connected in blue.

\section{Partitioning into monochromatic connected subgraphs}
	\label{sec:partition-two-colours}
	In this section we prove \Cref{thm:two-cols-partn}. 
	\thmTwoColsPartn*	
	We note that the minimum degree condition in this theorem cannot be
	improved; this can be seen by taking $t = 2$ in \Cref{ex:cover-t}, which
	we describe in \Cref{sec:conclusion}.

	\begin{proof}[ of \Cref{thm:two-cols-partn}] 
		Throughout this proof, we assume that the number of vertices
		$n$ is sufficiently large.  
		Suppose, for a contradiction, that
		the vertices of $G$ cannot be partitioned into two monochromatic sets.

		\begin{claim}\label{claim:large-blue-compt}
			There is a blue component of order at most $(n+1)/6$. 
		\end{claim}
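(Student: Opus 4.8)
The plan is to argue by contradiction: suppose that every blue component has order greater than $(n+1)/6$, and derive a partition of $V(G)$ into two monochromatic connected subgraphs, contradicting our standing assumption. Throughout I would use that $\delta(G) \ge (2n-5)/3 > n/2$, so that $G$ is connected and every vertex has at most $n-1-\delta(G) \le (n+2)/3$ non-neighbours.

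First I would reduce to the case where the blue graph has at least three components. Indeed, if the blue graph were connected, then picking a non-cut vertex $x$ of the blue graph (which exists since every connected graph on at least two vertices has one) gives a partition of $V(G)$ into the blue-connected set $V(G)\setminus\{x\}$ and the trivially connected singleton $\{x\}$; and if the blue graph had exactly two components $C_1,C_2$, then $\{C_1,C_2\}$ is already a partition into two blue-connected subgraphs. Either way we contradict the assumption, so the blue graph has at least three components. Since these components partition $V(G)$ and each has order greater than $(n+1)/6$, there are at most five of them, and each has order less than $n-2\cdot\frac{n+1}{6} < \frac{2n-1}{3}$.

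With this in hand, the heart of the argument is to produce the forbidden partition. The key structural observation is that every edge of $G$ joining two distinct blue components is red; hence, for any blue component $C$, the red graph induced on $V(G)\setminus C$ already contains all the \emph{cross} edges between the remaining (at least two) blue components. I would aim to choose a blue component $C$ for which $V(G)\setminus C$ is red-connected: then $C$ (blue-connected) together with $V(G)\setminus C$ (red-connected) is the desired partition. To see that $V(G)\setminus C$ is red-connected, I would use that each remaining component has order greater than $(n+1)/6$ while every vertex has at most $(n+2)/3$ non-neighbours, so that each vertex of $V(G)\setminus C$ sends red edges into the other large components; the threshold $(n+1)/6$ is precisely what is needed to make these cross edges glue $V(G)\setminus C$ into a single red component.

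I expect the main obstacle to lie exactly in this last connectivity step, and in the tightness of the numerics. Since a single component may have order only about $n/6$ while a vertex may miss as many as about $n/3$ others, one cannot in general connect two vertices of the same component through a common red neighbour lying in one other component, so red-connectivity must be extracted from the combined cross edges among all the remaining components rather than pairwise. Moreover one must handle the degenerate regime in which one blue component has order close to $2n/3$ and the others are only just above $(n+1)/6$, and rule out the scenario in which the red graph itself splits into several components; equivalently, one must show that the simultaneous presence of at least three large blue components and at least three red components is incompatible with the minimum-degree hypothesis. This is where careful, constant-precise bookkeeping of the degree inequalities will be required.
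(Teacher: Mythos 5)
Your reduction to at least three blue components is fine (the paper makes the same reduction, simultaneously assuming at least three red components), but the heart of your argument is missing, and the one quantitative justification you offer for it is false as stated. You want a blue component $C$ such that $V(G)\setminus C$ is red-connected, and you justify this by saying that each vertex of $V(G)\setminus C$ sends red edges into the other large components, because components have order greater than $(n+1)/6$ while non-neighbourhoods have size at most $(n+2)/3$. These numbers point the wrong way: if the blue components are $C_1,\dots,C_k$ with $|C_1|$ close to $2n/3$ and $C=C_k$ of order about $n/6$, then for a vertex $v\in C_1$ the union of the \emph{other} components inside $V(G)\setminus C$ has order only about $n/6$, well below the non-neighbour budget of $(n+2)/3$, so $v$ may have no edges at all leaving $C_1$; its red-connectivity would have to come from red edges inside $C_1$, about which you know nothing. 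You concede this in your final paragraph, but the task you then declare to be the real remaining work --- showing that three blue components each larger than $(n+1)/6$, three red components, and $\delta(G)\ge (2n-5)/3$ cannot coexist --- is precisely the claim to be proved, in contrapositive form. So the proposal defers, rather than solves, the entire difficulty.

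The paper avoids constructing a partition at this stage altogether. It takes a smallest \emph{red} component $R$, which satisfies $|R|\le n/3$ by pigeonhole among the at least three red components, and exploits the fact that vertices of $R$ send many \emph{blue} edges out of $R$: if $|R|\le (n-5)/3$, every two vertices of $R$ have a common blue neighbour outside $R$, so $R$ lies in a single blue component of order at least $(2n-1)/3$, and pigeonhole among the at least three blue components yields one of order at most $(n+1)/6$; in the remaining range $(n-4)/3\le |R|\le n/3$, either the same argument applies, or there are two vertices of $R$ with disjoint blue neighbourhoods, in which case two blue components together cover all but at most $5$ vertices, forcing a blue component of order at most $4$. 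This counting route is what produces the tight constant $(n+1)/6$, and it is the idea your proposal lacks.
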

		\begin{proof-claim}[ of \Cref{claim:large-blue-compt}]
			We may assume that there are at least three red
			components and at least three blue components, as otherwise the
			vertices may be partitioned into two red sets or two blue sets
			(recall that a \emph{red set} is defined to be a set of vertices
			that is connnected in red, and similarly for blue),
			contradicting our assumption.  Let $R$ be a red
			component of smallest order; so, $|R| \leq n/3$. 
			
			Let us assume first that $|R|\leq (n-5)/3$. Since every vertex in
			$R$ sends at least $(2n-5)/3 - (|R| - 1) > (n - |R|)/2$ blue
			edges outside of $R$, every two vertices in $R$ have a common blue
			neighbour outside of $R$. Hence, $R$ is contained in a blue
			component of order at least $|R| + (2n-5)/3 - (|R| - 1)
			\ge (2n-1)/3$. Since there are at least three blue components,
			there is a blue component of order at most $(n-(2n-1)/3)/2=
			(n+1)/6$. 
			
			We now assume that $(n-4)/3 \le |R| \leq n/3$. If every two
			vertices in $R$ have a common blue neighbour, then, again, $R$ is
			contained in a blue component of order at least $(2n-1)/3$ and as
			before there is a blue component of order at most $(n+1)/6$.
			Otherwise, there exist two vertices $u, w\in R$ whose blue
			neighbourhoods do not intersect. But both $u$ and $v$ have at
			least $(n-5)/3$ blue neighbours outside of $R$, and therefore
			every vertex in $R \setminus \{u,v\}$ has a common blue neighbour
			with either $u$ or $w$. It follows that there are two blue
			components (namely, the components $C_b(u)$ and $C_b(w)$) whose
			union has order at least $|R| + 2(n-5)/3 > n - 5$, hence there is
			a blue component of order at most $4$.
		\end{proof-claim}
		
		\begin{claim} \label{claim:red-star}
			There is a red set $U$ of size at most $27\log n$
			such that $|N_{r}(U)| \geq 2n/3-27\log n$.	
		\end{claim}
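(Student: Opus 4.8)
The plan is to extract from \Cref{claim:large-blue-compt} a large supply of vertices of high red degree and then to grow a red-connected set around one of them whose red neighbourhood sweeps up roughly two thirds of the graph. Let $B$ be a blue component with $|B| \le (n+1)/6$. Since every blue edge incident to a vertex of $B$ stays inside $B$, each $v \in B$ has blue degree at most $|B|-1 \le (n-5)/6$, and hence red degree
\[
 d_r(v) \ge \frac{2n-5}{3} - \frac{n-5}{6} = \frac{3n-5}{6} \ge \frac n2 - 1 .
\]
So $B$ is a pool of vertices each seeing at least $(1/2-o(1))n$ vertices in red. First I would argue that these pool vertices essentially lie in a single red component $C$: if two of them lay in different red components then, as each has red degree just below $n/2$, those components would each have order about $n/2$ and together cover all but $O(1)$ of $V(G)$, so they would be two red sets covering almost everything --- a configuration I would show leads directly to a partition into two monochromatic sets, contradicting our standing assumption. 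Thus I may fix $u^\ast \in B$ and work inside $C = C_r(u^\ast)$, which already satisfies $|C| \ge |N_r(u^\ast)| \ge n/2 - 1$.

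The heart of the argument is a greedy doubling process producing the set $U$. Starting from $U = \{u^\ast\}$, at each stage I set $S = N_r(U)$, $A = U \cup S$, and, while $|S|$ is below the target $2n/3 - 27\log n$, I add to $U$ a vertex $x \in S$ (so that $U$ stays red-connected) chosen to maximise the number of new vertices $|N_r(x) \setminus A|$ it contributes. The point is to choose $x$ so that the uncovered-but-reachable part of $V(G)$ shrinks by a constant factor; a pigeonhole average over the red edges leaving $A$ shows that some such $x$ captures a constant proportion of what is still reachable, so after $O(\log n)$ rounds the red neighbourhood absorbs all of $C$ up to an $O(\log n)$ error, giving $|U| \le 27\log n$ and the stated bound --- provided $|C|$ itself is at least $2n/3$.

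The main obstacle, and the reason the constant $\tfrac12$ is borderline, is exactly this last proviso: since the pool vertices have red degree only just under $n/2$, any two of their red neighbourhoods may be almost disjoint and, once the covered blob $A$ grows past $n/2$, a single added vertex need contribute nothing new, so the expansion can stall at $|C|$. I would handle this by feeding the contradiction hypothesis back in: if the red component $C$ had order less than $2n/3$, then its complement $D = V(G) \setminus C$ would have more than $n/3$ vertices, each with red edges confined to $D$ and therefore, by the minimum-degree bound, blue degree bounded below by a positive constant fraction of $n$. I would then show that $D$ must split into at least two blue components --- for otherwise $C$ and $D$ would be a red set and a blue set partitioning $V(G)$ --- and analyse this splitting to force a partition into two monochromatic sets, again a contradiction. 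Consequently $|C| \ge 2n/3$, the doubling goes through, and \Cref{claim:red-star} follows. The delicate work is thus concentrated in the expansion step, namely securing a geometric rather than merely additive gain while preserving red-connectivity, and in ruling out the small-component case.
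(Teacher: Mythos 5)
There is a genuine gap at the heart of your argument, namely the claimed geometric expansion of the greedy process. Under the standing assumptions, nothing forces even one vertex of $S = N_r(U)$ to capture ``a constant proportion of what is still reachable''. The only vertices known to have large red degree are those of the small blue component $B$; every other vertex of $C = C_r(u^\ast)$ may have almost all of its $(2n-5)/3$ edges blue, so the number of red edges joining $A = U \cup N_r(U)$ to $C \setminus A$ can be as small as one. For instance, $C$ could consist of a dense red core of order roughly $n/2$ (containing the red neighbourhoods of $B$) together with long pendant red paths; then each greedy step adds $O(1)$ new vertices, and after $27\log n$ rounds you have covered only $n/2 + O(\log n)$ vertices, far short of $2n/3$. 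No pigeonhole over the red edges leaving $A$ can repair this, because there is no lower bound on the number of such edges. Your fallback --- proving $|C| \ge 2n/3$ by ``analysing the splitting'' of $V(G) \setminus C$ into blue components --- is only a declaration of intent, but even if it were carried out it would not close the gap, since the obstruction above concerns the \emph{rate} of expansion, not the size of $C$.

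The paper avoids any growth process with a cancellation trick that your proposal lacks: the red degree that $u$ ``loses'' to blue edges is lost entirely inside $B$, and $B$ itself can be recaptured wholesale. Fix $u \in B$ and let $N = N_r(u) \setminus B$; since every edge from $u$ leaving $B$ is red, $|N| \ge (2n-5)/3 - |B| + 1$. By inclusion--exclusion, every $w \in B$ has at least $2\left((2n-5)/3 - |B|\right) - (n - |B|) \ge (n-21)/6$ red neighbours inside $N$, so a random subset $U' \subseteq N$ of density $13\log n/n$ simultaneously satisfies $|U'| \le 26 \log n$ and red-dominates $B$, with positive probability. Taking $U = U' \cup \{u\}$ (a red star, hence a red set), the set $N_r(U)$ contains the disjoint union $(N \setminus U') \cup (B \setminus \{u\})$, whose size is at least
\begin{equation*}
\left( (2n-5)/3 - |B| - 26\log n \right) + \left( |B| - 1 \right) \;\ge\; 2n/3 - 27\log n ,
\end{equation*}
where the $|B|$ terms cancel. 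In other words, $u$'s blue edges inside $B$ are traded for the vertices of $B$ themselves, all of which land in $N_r(U)$; this works no matter how thin or sprawling the rest of the red component is, which is exactly what defeats your expansion-based approach.
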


		\begin{proof-claim}[ of \Cref{claim:red-star}]
			By the previous claim, there is a blue component $B$ of
			order at most $(n+1)/6$. Note that
			every vertex in $B$ has at
			least $(2n-5)/3-|B|$ red neighbours in $V(G)\setminus B$. Fix a
			vertex $u \in B$ and let $N$ be the set of red neighbours of $u$
			outside $B$. Every $w \in B$ has at least the following number of
			red neighbours in $N$.
			\begin{align*}
				2 \cdot ((2n-5)/3-|B|)-(n-|B|) 
				  =  (n-10)/3- |B|  
				\geq (n-21)/6.
			\end{align*} 
			Now let $U'$ be a random subset of $N$ where
			each vertex $w \in N$ belongs to $U'$, independently, with probability
			$13 \log n/n$. Let $I_w$ be the event that $w$ (where $w \in B$)
			does not have a red neighbour in $U'$. We bound
			\begin{equation*}
				\mathbb{P} \Big(\bigcup_{w \in B} I_w\Big) \leq  
				|B| \cdot \mathbb{P}(I_w) \leq 
				n \cdot \left(1-\frac{13\log n}{n}\right)^{\frac{n-21}{6}}\leq 
				n \cdot e^{-2\log n} < 1/2. 
			\end{equation*}
			Note that since $\mathbb{E}(|U'|) \le 13 \log n$, we have
			$\mathbb{P}(|U'|\geq 26\log n) \le 1/2$, by Markov's inequality.
			Therefore, there is a
			choice of $U' \subseteq N$ such that $|U'| \leq 26\log n$ and
			every vertex in $B$ is joined by a red edge to some
			vertex in $U'$.  We choose $U = U' \cup \{u\}$. Note that
			\begin{align*} 
				N_r(U' \cup \{u\})| 
				& \geq |N \setminus U'| + |B \setminus \{u\}| \\
				& \geq \left( (2n-5)/3 - |B| - 26\log n \right) + (|B|-1) \\ 
				& = 2n/3 - 27\log n .
			\end{align*} 
			Hence, the set $U = U' \cup \{u\}$ satisfies the requirements of
			\Cref{claim:red-star}.
		\end{proof-claim}
			
		Let $U$ be a red set as in \Cref{claim:red-star} and
		let $N = N_r(U)$. Now choose a maximal sequence of distinct vertices
		$x_1,\ldots,x_t \in V \setminus (N \cup U)$ so that $x_i$ has at
		least $\log n$ red neighbours in the set $N \cup \{x_1,\ldots,x_{i-1}
		\}$, for every $i \in [t]$. Then put $\comp{N} = N \cup \{x_1, \ldots,
		x_t\}$ and write $W
		= V(G) \setminus \left(U \cup \comp{N}\right)$. Note that every
		vertex in $W$
		has at most $\log n$ red neighbours in $\comp{N}$.
		\begin{claim} \label{claim:comp-N-small}
			$\left|\comp{N}\right| \le 2n/3 + 3 \log n + 4$.  
		\end{claim}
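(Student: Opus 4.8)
The plan is to argue by contradiction: assuming $|\comp{N}| > 2n/3 + 3\log n + 4$, I will produce a partition of $V(G)$ into a red-connected set and a blue-connected set, contradicting the standing assumption. The backbone of the argument is the observation that $R \defeq U \cup \comp{N}$ is connected in red, and robustly so. Indeed, $U$ is a red set, every vertex of $N = N_r(U)$ is joined to $U$ by a red edge, and each $x_i$ was added only because it has at least $\log n$ red neighbours in $N \cup \{x_1,\dots,x_{i-1}\}$. Hence deleting an arbitrary terminal segment $x_{s+1},\dots,x_t$, or any set of fewer than $\log n$ vertices disjoint from $U$, leaves $R$ red-connected, since every surviving vertex of $N$ is still red-adjacent to $U$ and every surviving $x_i$ retains a red neighbour certifying its connection to $U$ through earlier vertices.

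Next I would estimate the blue degrees into $\comp{N}$. Writing $|W| = n - |U| - |\comp{N}|$ and using that each $w \in W$ has at most $\log n$ red neighbours in $\comp{N}$ (by maximality of the sequence), at most $|U|$ neighbours in $U$, and at most $|W|-1$ neighbours in $W$, we subtract these from $\delta(G) \ge (2n-5)/3$; the two $|U|$ terms cancel, leaving
\begin{equation*}
	|N_b(w) \cap \comp{N}| \;\ge\; |\comp{N}| - \tfrac{n+5}{3} - \log n + 1 .
\end{equation*}
Under the hypothesis $|\comp{N}| > 2n/3 + 3\log n + 4$ the right-hand side exceeds $|\comp{N}|/2$, so any two vertices of $W$ have a common blue neighbour in $\comp{N}$; in fact the numbers are arranged so that every pair of vertices of $W$ has \emph{more than $\log n$} common blue neighbours there. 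In particular all of $W$ lies in a single blue component.

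Finally I would assemble the partition. The goal is to choose a set $Z \subseteq \comp{N}$ of connector vertices so that $W \cup Z$ is blue-connected, and then take the blue part to be $W \cup Z$ and the red part to be $R \setminus Z$, which stays red-connected by the robustness recorded in the first paragraph. The abundance of common blue neighbours lets one link the vertices of $W$ through $\comp{N}$: since a single vertex of $\comp{N}$ is blue-adjacent to at least half of the as-yet-unlinked vertices of $W$, a logarithmic number of hubs suffices to reach all of $W$, and consecutive hubs can be tied together using the more-than-$\log n$ common neighbours guaranteed above.

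The main obstacle is exactly this last step: one must keep $|Z \cap \comp{N}|$ below the robustness threshold, so that discarding $Z$ does not disconnect the red side, while still blue-connecting \emph{all} of $W$. Balancing the number of connectors against this red-connectivity budget is where the precise minimum-degree hypothesis $(2n-5)/3$ and the slack $3\log n + 4$ in the statement get consumed — note that the slack is tuned precisely so that pairwise common blue neighbourhoods exceed $\log n$ — and one likely also exploits the freedom to place the terminal $x_i$ into the blue side at no cost to red-connectivity.
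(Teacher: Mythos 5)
Your preparatory work matches the paper's: the contradiction hypothesis, the robust red-connectivity of $U\cup\comp{N}$, the lower bound $|N_b(w)\cap\comp{N}|\ge|\comp{N}|-(n+5)/3-\log n+1$ for $w\in W$, and the conclusion that any two vertices of $W$ have more than $\log n$ common blue neighbours in $\comp{N}$ are all correct and are exactly the estimates the paper uses. But the proof is incomplete precisely at the step you flag as the ``main obstacle'', and the obstacle is real, not a matter of tuning. Your scheme needs a \emph{deterministic} set $Z\subseteq\comp{N}$ with $|Z|<\log n$ (the red-robustness budget) that blue-connects all of $W$. Greedy hubs chosen by averaging over $\comp{N}$ each absorb just over half of the uncovered part of $W$, so you need about $\log_2|W|$ hubs; since distinct hubs' stars need not touch, you need roughly as many connectors again, giving $|Z|\approx 2\log_2 n$, which exceeds the budget under any reading of $\log$. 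The alternative of restricting all hubs to $N_b(w_0)\cap\comp{N}$ for one fixed $w_0$ (so the stars are tied through $w_0$ and no connectors are needed) destroys the halving property: there you can only guarantee each uncovered vertex $\log n$ blue neighbours, and a bipartite graph with one-sided minimum degree $\log n$ may have no dominating set smaller than $|W|/\log n$ (disjoint unions of $K_{\log n,\log n}$), so nothing you have established produces a small enough $Z$. Note also that your idea of moving a terminal segment of the $x_i$ to the blue side is not free: any such $x_i$ must itself be blue-attached to $W\cup Z$, which is not guaranteed.

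The paper closes the gap with randomness, and the key realisation is that the deleted set need \emph{not} be small --- it only needs to avoid swallowing any $x_i$'s entire forward red neighbourhood. Fix one $w\in W$, let $X=N_b(w)\cap\comp{N}$, and let $S$ be a random subset of $X$ with each vertex kept independently with probability $1/2$. Then $\{w\}\cup S$ is automatically a blue star, so connectors never arise; every $x\in W$ has at least $\log n$ blue neighbours in $X$ (your pairwise bound applied to the pair $x,w$), so $x$ fails to attach to $S$ with probability at most $2^{-\log n}$; and every $x_i$ loses all of its at least $\log n$ forward red neighbours with probability at most $2^{-\log n}$, since only those lying in $X$ can be deleted and each survives with probability $1/2$. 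A union bound over these at most $|W|+t<n$ events shows that with positive probability $W\cup S$ is blue and $(U\cup\comp{N})\setminus S$ is red, the desired contradiction. Your robustness observation is exactly what makes the second family of failure events unlikely; what is missing from your write-up is the switch from a small deterministic connector set to a constant-density random subset of the blue neighbourhood of a single vertex.
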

		\begin{proof-claim}[ of \Cref{claim:comp-N-small}] 
			For a contradiction, suppose that $\left|\comp{N}\right| > 2n/3 +
			3 \log n + 4$.  We shall deduce that the vertices can be
			partitioned into a red set and a blue one, a contradiction.

			To define the partition, fix $w \in W$ and let $X = N_b(w) \cap
			\comp{N}$.  Let $S$ be a random subset of $X$, obtained by taking
			each vertex of $X$ independently with probability $1/2$. We claim
			that, with positive probability, $(U \cup \comp{N}) \setminus S$
			is red \emph{and} $W \cup S$ is blue. 
			
			To bound the probability that $W \cup S$ is blue, we consider the
			probability that every vertex in $W$ is joined by a blue edge to
			$S$ (an event which would imply that $W \cup S$ is blue).  For
			every $x,y \in V$ we have $|N(x)\cap N(y)| \geq n/3 - 10/3$,
			hence $|N(x) \cap N(y) \cap \comp{N}| \geq 3\log n$.  Since every
			vertex in $W$ has at most $\log n$ red neighbours in $\comp{N}$,
			we have $|N_b(x) \cap N_b(y) \cap \comp{N} | \geq \log n $.
			Therefore the probability that a given $x \in W$ has no blue
			neighbours in $S$ is at most $2^{-\log n} = 1/n$. Thus, the
			expected number of vertices in $W$ with no edges to $S$ is
			smaller than $1/2$ ( note that $|W|\leq n/3$). Hence, $\PP( W
			\cup S \textit{ is blue}) > 1/2$. 
			
			We now estimate the probability that $(U \cup \comp{N}) \setminus
			S$ is red. First note that as $N = N_r(U)$, we have that $U \cup
			N'$ is red for any subset $N' \subseteq N$.  So it remains to
			show that the vertices of $\{x_1,\ldots,x_t\} \setminus S$ can be
			joined, via a red path, to $U \cup (N \setminus S)$, with
			sufficiently high probability. For $i \in [t]$, let $E_i$ be the
			event that vertex $x_i$ is joined by a red edge to $(N \cup
			\{x_1,\ldots,x_{i-1}\})\setminus S$.  Note that if the event $E =
			\bigcap_i^t E_i$ holds, $(U \cup \comp{N})\setminus S$ is red.
			Now, to estimate $\PP(E_i)$, for $i \in [t]$, note that each
			vertex $x_i$ has at least $\log n$ forward neighbours, and the
			probability that one of these vertices is deleted is at most
			$1/2$. Thus $\PP(E_i) \ge 1 - 2^{-\log n} = 1 - 1/n$, therefore
			$\PP\!\left((U \cup \comp{N}) \setminus S \text{ is red}\right) \ge
			\PP(E) > 1/2$, where the second inequality holds since $t < n/2$.
			
			Thus, with positive probability, $W \cup S$ is blue and $(U \cup
			\comp{N}) \setminus S$ is red. In particular, the vertices can be
			partitioned into a blue set and a red one, a contradiction.
		\end{proof-claim}

		\begin{claim} \label{claim:bigRedVertex}
			There is a vertex of blue degree at most $90 \log n$. 
		\end{claim}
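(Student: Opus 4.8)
The plan is to exploit the structure assembled for the previous claims: $A \defeq U \cup \comp{N}$ is a red-connected set of size roughly $2n/3$, while $W = V(G)\setminus A$ has size roughly $n/3$, and every vertex of $W$ has at most $\log n$ red neighbours in $\comp{N}$. The first step is to turn the minimum-degree hypothesis into a lower bound on how many \emph{blue} neighbours each $w \in W$ has inside $\comp{N}$. Since $w$ has at most $|U|\le 27\log n$ neighbours in $U$, at most $|W|-1$ in $W$, at most $\log n$ red neighbours in $\comp{N}$, and since $U,\comp{N},W$ partition $V(G)$ (so $|U|+|W| = n - |\comp{N}|$), the degree condition $d(w)\ge (2n-5)/3$ forces $w$ to have at least
\[
\beta \defeq |\comp{N}| - \tfrac{n+2}{3} - \log n \;\ge\; \tfrac{n-2}{3} - 28\log n
\]
blue neighbours in $\comp{N}$, where the final inequality uses $|\comp{N}| \ge |N| \ge 2n/3 - 27\log n$ from \Cref{claim:red-star}.

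The heart of the argument is a packing observation on the blue components. If three vertices of $W$ were to lie in three distinct blue components, their blue neighbourhoods inside $\comp{N}$ would be pairwise disjoint, so $\comp{N}$ would contain at least $3\beta$ vertices; but $\beta$ is about $n/3$ whereas $|\comp{N}| \le 2n/3 + 3\log n + 4$ by \Cref{claim:comp-N-small}, so $3\beta > |\comp{N}|$ for large $n$, a contradiction. Hence $W$ is contained in the union of at most two blue components. On the other hand, we may assume there are at least three blue components, since otherwise $V(G)$ splits into at most two blue sets, contradicting our standing assumption. Therefore some blue component $B_0$ is disjoint from $W$, and so $B_0 \subseteq A = U \cup \comp{N}$.

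To finish I would bound $|B_0|$. Each of the (at most two) blue components meeting $W$ contains the blue-$\comp{N}$-neighbourhood of one of its $W$-vertices, hence at least $\beta$ vertices of $\comp{N}$; as these sets are disjoint from $B_0$, substituting the bounds on $|\comp{N}|$ gives $|B_0 \cap \comp{N}| \le |\comp{N}| - 2\beta \le 30\log n$. Since $|B_0 \cap U| \le |U| \le 27\log n$ and $B_0 \cap W = \emptyset$, we obtain $|B_0| \le 57\log n$, comfortably below $90\log n$; any vertex of $B_0$ then has blue degree at most $|B_0|-1 \le 90\log n$, as required. (A parallel computation confirms that the small-blue-degree vertex cannot lie in $W$, since every $w\in W$ has blue degree about $n/3$, nor in a red component of $W$ other than the one containing $A$, for the same reason.)

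The step I expect to be the main obstacle is making the count robust in the degenerate case where $W$ meets only \emph{one} blue component: then one can subtract only a single copy of $\beta$, so $\comp{N}\setminus B_0$ need not be small and the bound on $|B_0|$ breaks down. The plan to resolve this is to show that if all of $W$ lies in a single blue component $B_1$, then $V(G)$ already admits a partition into the blue set $B_1$ and a red set, contradicting our assumption. The delicate point is that the complementary set $A\setminus B_1$ must remain red-connected, and guaranteeing this seems to require re-running the peeling/random-selection argument of \Cref{claim:comp-N-small} using the slack afforded by the minimum degree to keep the red hub $U$ and its red skeleton intact while blue-connecting $W$; this is precisely where the single-component case resists the clean packing argument above and is the part I would expect to need the most care.
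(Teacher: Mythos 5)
Your packing dichotomy is correct and complete in one of its two branches: if $W$ meets two distinct blue components, the disjointness of their blue neighbourhoods inside $\comp{N}$ does force a third blue component of order $O(\log n)$ (modulo an arithmetic slip: $|\comp{N}| - 2\beta$ is at most roughly $59\log n$, not $30\log n$, though your final bound of $90\log n$ still survives). But the case you flag yourself --- all of $W$ inside a single blue component $B_1$ --- is a genuine gap, and your proposed repair does not go through. Membership of $v,w \in W$ in the same blue component says nothing about the sets $N_b(v)\cap\comp{N}$ and $N_b(w)\cap\comp{N}$: they may be nearly disjoint, with $v$ and $w$ joined only by a long blue path, in which case $B_1$ swallows essentially all of $\comp{N}$ and the remaining (at least two) blue components split roughly $n/3$ vertices between them, so no small component is forced by counting. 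Nor can you ``re-run'' the random-selection argument of \Cref{claim:comp-N-small} on the strength of the minimum degree alone: there, the bound $|N(x)\cap N(y)\cap \comp{N}|\ge 3\log n$ was derived from the minimum degree \emph{together with} the contradiction hypothesis $|\comp{N}| > 2n/3 + 3\log n + 4$, and at the present stage of the proof one knows precisely the negation of that hypothesis, so no lower bound on common blue neighbourhoods inside $\comp{N}$ is available. Finally, partitioning into $B_1$ itself and its complement fails for the reason you suspect: $B_1$ may contain vertices of $U$ (even all of $U$), after which $(U\cup\comp{N})\setminus B_1$ has no reason to be red-connected.

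The paper closes exactly this case with a different dichotomy, on common blue neighbourhoods rather than on the number of blue components meeting $W$. Fix $w\in W$. Either some $v\in W$ satisfies $|N_b(v)\cap N_b(w)\cap\comp{N}| < \log n$; then $N_b(v)\cap \comp{N}$ and $N_b(w)\cap\comp{N}$ jointly occupy all but $O(\log n)$ of $\comp{N}$, so $C_b(v)\cup C_b(w)$ --- which may well be a \emph{single} component, i.e.\ precisely your problematic case --- covers all of $W$ and all but at most $90\log n$ vertices of $G$, whence one of the at least three blue components has order at most $90\log n$. Or else every $v\in W$ satisfies $|N_b(v)\cap N_b(w)\cap\comp{N}| \ge \log n$, and only under this case hypothesis does the random choice of $S\subseteq N_b(w)\cap\comp{N}$ blue-connect $W\cup S$ while keeping $(U\cup\comp{N})\setminus S$ red, giving the contradiction. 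Your two-component count can replace part of this argument, but repairing your one-component case requires both branches of the paper's dichotomy, i.e.\ essentially the paper's proof.
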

		\begin{proof-claim}[ of \Cref{claim:bigRedVertex}]
			By definition of $\comp{N}$ and since $\left|\comp{N}\right| \ge
			2n/3 - 27\log n$, every vertex in $W$ has at least $n/3 - 29 \log
			n $ blue neighbours in $\comp{N}$. 
			
			Fix a vertex $w \in W$. If there is a vertex $v \in W$ with
			$|N_b(v) \cap N_b(w) \cap \comp{N}| < \log n$, then the two blue
			components containing $v$ and $w$ cover all vertices of $W$ and
			all but at most $62 \log n$ vertices of $\comp{N}$ (as
			$\left|\comp{N}\right| \le 2n/3 + 3\log n + 4$, by the previous
			claim). Since $|U| \le 27 \log n$, it follows that these two
			components cover all but at most $90 \log n$ vertices. Recall
			that there are at least three blue components, hence there is a
			component of order at most $90 \log n$, and any vertex in that
			component has blue degree at most $90 \log n$. 

			Otherwise, every vertex $v\in W$ satisfies $|N_b(v) \cap N_b(w)
			\cap \comp{N}| \ge \log n$. As in \Cref{claim:comp-N-small}, let
			$S$ be an uniformly random subset of $N_b(w) \cap \comp{N}$; we find that,
			with positive probability, $\left(U \cup \comp{N}\right)
			\setminus S$ is red and $W \cup S$ is blue, so the vertices can
			be partitioned into a red set and a blue one, a contradiction to
			our assumption.
		\end{proof-claim}

		Let $r$ be a vertex of blue degree at most $90 \log n$, which exists
		by the previous claim. By symmetry, there is a vertex $b$ of red
		degree at most $90 \log n$. Then $d_r(r), d_b(b) \ge 2n/3 - 90\log n
		- 2$. Write $A_1 = N_b(b) \setminus N_r(r)$, $A_2 = N_b(b) \cap
		N_r(r)$ and $A_3 = N_r(r) \setminus N_b(b)$.  Then $|A_2| \ge n/3 -
		180\log n - 4$ and $|A_1|, |A_3| \le n/3 + 90\log n + 2$.

		\begin{claim} \label{claim:third-vx}
			There is a vertex with no blue neighbours in $A_1$, no red
			neighbours in $A_3$, and at most $2\log n$ neighbours in $A_2$.
		\end{claim}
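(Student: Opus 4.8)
Write $P=C_b(b)$ and $Q=C_r(r)$ for the blue component of $b$ and the red component of $r$. The plan is to locate the required vertex outside $P\cup Q$, and to argue that if no such vertex exists then $G$ already admits the forbidden partition, contradicting the standing assumption.

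The key observation is that all three conditions are automatic for any vertex lying outside both $P$ and $Q$. Indeed, since every vertex of $A_1=N_b(b)\setminus N_r(r)$ is joined to $b$ by a blue edge, we have $A_1\subseteq P$; likewise $A_3\subseteq Q$, and $A_2=N_b(b)\cap N_r(r)\subseteq P\cap Q$. Hence if $z\notin P\cup Q$, then a blue edge from $z$ to $A_1$ would place $z$ in $P$ and a red edge from $z$ to $A_3$ would place $z$ in $Q$, so $z$ has no blue neighbour in $A_1$ and no red neighbour in $A_3$; moreover any edge from $z$ to $A_2\subseteq P\cap Q$, being red or blue, would again place $z$ in $P$ or in $Q$, so $z$ has no neighbour at all in $A_2$. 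Thus any $z\notin P\cup Q$ meets all three requirements (the third vacuously), and the claim follows the moment $P\cup Q\neq V$.

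It therefore remains to rule out $P\cup Q=V$, and here I would derive the forbidden partition. First I would record the sizes: by the proof of the previous claim we may take $r$ to lie in a blue component of order at most $90\log n$ and $b$ in a red component of order at most $90\log n$, whereas $d_r(r),d_b(b)\ge 2n/3-90\log n-2$ force $|P|,|Q|\ge 2n/3-O(\log n)$. Consequently $r\in Q\setminus P$ and $b\in P\setminus Q$, the private parts $P\setminus Q$ and $Q\setminus P$ have order at most $n/3+O(\log n)$, and $M\defeq P\cap Q$ has order at least $n/3-O(\log n)$. Note also that a vertex of $P\setminus Q$ lies in a red component disjoint from $Q$, hence contained in $P\setminus Q$, so all of its red neighbours lie in $P\setminus Q$ and (by the minimum degree hypothesis) it has at least $n/3-O(\log n)$ blue neighbours, all in $P$; symmetrically for $Q\setminus P$.

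To finish I would find a set $S\subseteq M$ for which $Q\setminus S$ is red-connected while $(P\setminus Q)\cup S$ is blue-connected: then $(P\setminus Q)\cup S$ and $Q\setminus S$ partition $V$ into a blue set and a red set, the desired contradiction. The two extremes point the way: at $S=\emptyset$ the red part $Q$ is connected but the blue part $P\setminus Q$ may not be, and at $S=M$ the blue part $P$ is connected but the red part $Q\setminus M$ may not be, so one should interpolate, growing $S$ from $\emptyset$ and adding overlap vertices only as needed to reconnect the private blue part, exploiting the large blue degrees of the vertices of $P\setminus Q$ to ensure that few vertices of $M$ suffice, so that deleting $S$ from the large, robustly red-connected set $Q$ keeps it red-connected. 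This connectivity bookkeeping in the degenerate case $P\cup Q=V$ is the main obstacle; by contrast, the reduction to a vertex outside $P\cup Q$ is immediate.
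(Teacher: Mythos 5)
Your reduction is correct, and in fact gives more than required: writing $P=C_b(b)$ and $Q=C_r(r)$, we have $A_1\subseteq P$, $A_3\subseteq Q$ and $A_2\subseteq P\cap Q$, so any vertex outside $P\cup Q$ has no blue neighbour in $A_1$, no red neighbour in $A_3$, and no neighbour in $A_2$ at all. But the claim is not proved, because the case $P\cup Q=V(G)$ --- which you yourself flag as ``the main obstacle'' --- is only sketched, and the sketch has concrete unaddressed problems. You want $S\subseteq M=P\cap Q$ such that $(P\setminus Q)\cup S$ is blue-connected and $Q\setminus S$ is red-connected. On the blue side, the large blue degrees of vertices of $P\setminus Q$ do not suffice: a vertex $s\in S$ may have all of its blue neighbours inside $M\setminus S$ (a vertex of $P$ is only guaranteed \emph{some} blue neighbour in $P$, possibly a single one, and possibly not in $(P\setminus Q)\cup S$), so $s$ can be blue-isolated in $(P\setminus Q)\cup S$; this rules out a random choice of $S$, and a greedy choice creates the dual problem on the other side. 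On the red side, ``robustly red-connected'' is asserted, not proved: $Q$ is merely a red component, and vertices of $Q$ at red-distance at least $2$ from $r$ may be attached to the rest of $Q$ only through vertices of $M$, so deleting $S$ can genuinely disconnect $Q$ in red. None of the structural facts you establish (the sizes of $P\setminus Q$, $Q\setminus P$, $M$, or the forced colours of the edges between these parts) excludes these situations, so the interpolation step is a missing argument, not bookkeeping.

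The efficient fix is to keep the hypothesis you discarded. You are free to argue by contradiction, so in the remaining case you may assume not only $P\cup Q=V(G)$ but also that \emph{every} vertex has a blue neighbour in $A_1$, or a red neighbour in $A_3$, or more than $2\log n$ neighbours in $A_2$. This is exactly what the paper does, and it makes your case distinction unnecessary: let $\{B,R\}$ be a uniformly random partition of $A_2$. Any vertex with more than $2\log n$ neighbours in $A_2$ has at least $\log n$ of them in a single colour, so it fails to have a blue neighbour in $B$ or a red neighbour in $R$ with probability at most $1/n$ (up to a factor $2$); by a union bound, with positive probability \emph{every} vertex of $G$ has a blue neighbour in $A_1\cup B\subseteq N_b(b)$ or a red neighbour in $A_3\cup R\subseteq N_r(r)$. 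Since $\{b\}\cup A_1\cup B$ is a blue star and $\{r\}\cup A_3\cup R$ is a red star, and these sets are disjoint, assigning each remaining vertex to one of them along its certificate edge partitions $V(G)$ into a blue set and a red set, contradicting the standing assumption. The negation hypothesis is precisely what supplies the attachment structure that your hand-built set $S$ lacks: it lets all connectivity run through the two stars around $b$ and $r$, instead of through the components $P$ and $Q$, whose internal structure you cannot control.
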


		\begin{proof-claim}[ of \Cref{claim:third-vx}]
			Suppose that the statement does not hold. Let $\{B, R\}$ be a
			random partition of $A_2$, obtained by putting vertices in $B$,
			independently, with probability $1/2$. Then, with positive
			probability, every vertex in $G$ has a blue neighbour in $A_1
			\cup B \subseteq  N_b(b)$ or a red neighbour in $A_3 \cup R
			\subseteq N_r(r)$. We thus obtain a
			partition of the vertices into a red set and a blue one, a
			contradiction. 		
		\end{proof-claim}

		Let $x$ be a vertex with no blue neighbours in $A_1$, no red
		neighbours in $A_3$, and at most $2\log n$ neighbours in $A_2$ (its
		existence is guaranteed by the previous claim). Then
		$|A_2| \le n/3 + 3\log n$, so $|A_1|, |A_3| \ge n/3 - 95\log n$.
		Furthermore, $x$ has at least $n/3 - 100\log n$ red neighbours in
		$A_1$ and at least $n/3 - 100 \log n$ blue neighbours in $A_3$.
		Write $A_1' = A_1 \cap N_r(x)$, $A_2' = A_2 \setminus N(x)$, and
		$A_3' = A_3 \cap N_b(x)$ (so $|A_1'|, |A_3'| \ge n/3 - 100\log n$ and
		$|A_2'| \ge n/3 - 190\log n$).

		\begin{claim} \label{claim:distinct-compts}
			The vertices $x$ and $b$ are in distinct blue components;
			similarly, $x$ and $r$ are in distinct red components.
		\end{claim}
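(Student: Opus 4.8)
The two statements are symmetric under exchanging the two colours (this swaps the roles of $b$ and $r$, of $A_1$ and $A_3$, and of $A_1'$ and $A_3'$, while fixing $x$), so the plan is to prove only that $x$ and $b$ lie in distinct blue components and to obtain the red statement for free. I would suppose, for a contradiction, that $x$ and $b$ lie in a common blue component, and build from this a partition of $V(G)$ into a red set $R^*$ and a blue set $B^*$, contradicting our standing assumption.

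First I would record the clean adjacency structure of $A_1', A_2', A_3'$. Since $A_1 \subseteq N_b(b)$ and $A_1' \subseteq N_r(x)$, every vertex of $A_1'$ is a blue neighbour of $b$ and a red neighbour of $x$; since $A_2 \subseteq N_b(b) \cap N_r(r)$, every vertex of $A_2'$ is a blue neighbour of $b$ and a red neighbour of $r$; and since $A_3 \subseteq N_r(r)$ and $A_3' \subseteq N_b(x)$, every vertex of $A_3'$ is a red neighbour of $r$ and a blue neighbour of $x$. As the sets $A_i'$ are disjoint and each has size at least $n/3 - 190\log n$, all but a set $Z$ of at most $390\log n$ vertices lie in $A_1' \cup A_2' \cup A_3'$. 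I would then set up the partition as follows. Using the assumption that $x$ and $b$ share a blue component, the core $\{x,b\} \cup A_1'$ is blue-connected (each vertex of $A_1'$ is blue-adjacent to $b$), and I place it in $B^*$; I place $r$ in $R^*$. The sets $A_2'$ and $A_3'$ are genuinely flexible — each of their vertices is red-adjacent to $r \in R^*$ and blue-adjacent to $b$ or $x \in B^*$ — so I would split $A_2' \cup A_3'$ by a uniformly random two-colouring, sending each vertex to $B^*$ or $R^*$ independently with probability $1/2$; whichever side a vertex lands on, it stays connected to the appropriate core. It then remains to attach each remaining vertex of $Z$ to one of the two parts by a single monochromatic edge into a part that is already connected.

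The heart of the argument is that no such leftover vertex $v$ can fail to attach. If $v$ has a blue edge into $A_1'$ it attaches deterministically to $B^*$; otherwise I claim $v$ has at least $\log n$ red edges or at least $\log n$ blue edges into $A_2' \cup A_3'$, in which case — since each such neighbour lands in the relevant part with probability $1/2$ — the vertex $v$ fails to attach with probability at most $2^{-\log n} = 1/n$. Indeed, if none of these held, then $v$ would have no blue edge to $A_1'$ and fewer than $2\log n$ edges to $A_2' \cup A_3'$, forcing $d(v) \le |A_1'| + 2\log n + |Z| < (2n-5)/3$ for large $n$, contradicting the minimum-degree hypothesis. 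A union bound over the at most $390\log n$ vertices of $Z$ then leaves positive probability that every leftover vertex attaches, yielding a partition of $V(G)$ into a blue set $B^*$ and a red set $R^*$, the desired contradiction. I expect the main obstacle to be precisely isolating this case analysis: the vertices that behave like $x$ (red into $A_1'$, blue into $A_3'$, almost no neighbours in $A_2'$) are exactly the ones a naive deterministic partition cannot place, and it is the random split of $A_2' \cup A_3'$ together with the minimum-degree bound that rules them out.
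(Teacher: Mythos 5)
Your overall strategy --- a uniformly random red/blue split of $A_2' \cup A_3'$ with $r$ as the red hub and $b, x$ as blue hubs, followed by attaching the $O(\log n)$ leftover vertices via a union bound --- is the same as the paper's, and your degree count showing each leftover vertex has at least $\log n$ red or blue edges into $A_2' \cup A_3'$ is sound. But there is a genuine gap at the first step: you assert that the core $\{x,b\} \cup A_1'$ is blue-connected "using the assumption that $x$ and $b$ share a blue component." This does not follow. Sharing a blue component of $G$ only provides a blue path from $x$ to $b$ \emph{somewhere in $G$}; it provides no blue edges inside your core. Indeed, by the defining property of $x$ (it has \emph{no} blue neighbours in $A_1 \supseteq A_1'$), the vertex $x$ is blue-isolated in $\{x,b\} \cup A_1'$ unless the edge $xb$ itself exists and is blue, which is not guaranteed. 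Consequently your set $B^*$ consists of two blue pieces --- one containing $b$, $A_1'$ and the $A_2'$-vertices assigned to $B^*$, the other containing $x$ and the $A_3'$-vertices assigned to $B^*$ --- and nothing in your construction supplies a blue bridge between them: the blue $x$--$b$ path in $G$ may run through vertices that your procedure places in $R^*$ (e.g.\ a vertex of $Z$ that attaches by red edges, or vertices of $A_2' \cup A_3'$ that land on the red side). A partition into two blue-connected pieces and one red piece is not a contradiction, so the argument does not close.

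The paper's proof fixes exactly this point. From the assumption that $x$ and $b$ lie in a common blue component it extracts a blue path $P$ from $\{x\} \cup A_3'$ to $\{b\} \cup A_1' \cup A_2'$ whose inner vertices avoid $A_1' \cup A_2' \cup A_3' \cup \{x,b\}$; since that complement has only $O(\log n)$ vertices, $|P| \le 400 \log n$. It then forces $V(P)$ into the blue part and performs the random split only on $(A_2' \cup A_3') \setminus V(P)$, so that $P \cup \{x,b\} \cup B$ is genuinely a blue set bridging the two sides, while sacrificing the $\le 400\log n$ vertices of $P$ from the random split is harmless to the probabilistic estimates. Inserting this path (and excluding it from your random split) is precisely what your proposal is missing; the rest of your argument would then go through.
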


		\begin{proof-claim}[ of \Cref{claim:distinct-compts}]
			Suppose that $x$ and $b$ are in the same blue component.  Then
			there is a blue path $P$ from $\{x\} \cup A_3'$ to $\{b\} \cup
			A_1' \cup A_2'$. We may assume that the inner vertices of $P$ are
			outside of $A_1' \cup A_2' \cup A_3' \cup \{x, b\}$. Hence, $|P|
			\le 400 \log n$.

			Now, let $\{B, R\}$ be a random partition of $(A_2' \cup A_3')
			\setminus V(P)$, obtained by putting each vertex in $B$,
			independently, with probability $1/2$. It is easy to see that,
			with positive probability, every vertex in $G$ has a red
			neighbour in $R$ or a blue neighbour in $B$, from which it can be
			deduced that there is a partition of the vertices into a red set
			and a blue one, which is a contradiction. Indeed, note that
			$P\cup\{x,b\}\cup B$ is a blue set and $\{r\}\cup R$ is a red
			set. Thus, we have that $b$ and $x$ are in distinct blue
			components; by symmetry, $r$ and $x$ are in different red
			components.
		\end{proof-claim}

		Note that $|C_b(b)|,|C_r(r)| \ge 2n/3 - 91 \log n$ and
		$|C_b(x)|,|C_r(x)| \ge n/3 - 100\log n$.  Recall that there are at
		least three blue components.  Hence, there is a vertex $r_1$ which is
		not in $C_b(b)$ or in $C_b(x)$. It follows that $d_{b}(r_1)$ is at
		most $191\log n$, hence it has red degree at least $2n/3 - 192 \log
		n$, so $r_1 \in C_r(r)$. Similarly, there is a vertex $b_1$ which is
		not in $C_r(r)$ or in $C_r(x)$, and therefore it must belong to
		$C_b(b)$.  We claim that the set $X=\{b_1,r_1,x\}$ is independent. We
		cannot have $r_1\tilde x$ or $b_1\tilde x$, for this either
		contradicts the choice of $r_1\notin C_{b}(x)$ and $b_1 \notin
		C_{r}(x)$ or it contradicts the statement of
		\Cref{claim:distinct-compts}. If we had $r_1 \tilde b_1$ and this
		edge was coloured red then $b_1 \in C_{r}(r)$ which is a
		contradiction, by definition of $b_1$. If $r_1b_1$ is coloured blue
		then we arrive at the contradiction $r_1 \in C_{b}(b)$.  Thus $X$ is
		independent. So finally, by the minimum degree condition, there must
		be a vertex $w$ that is adjacent to all three vertices in $X$.
		Indeed, if no such $w$ exists, then the number of edges between $X$
		and $V(G) \setminus X$ is at most $2(n-3) < 3(2n-5)/3$, a
		contradiction.  Without loss of generality, $w$ sends two red edges
		into $X$, implying that two of these vertices in $X$ belong to the
		same red component, a contradiction. This completes our proof of
		\Cref{thm:two-cols-partn}.
	\end{proof}

\section{Covering with monochromatic components of distinct colours}
\label{sec:cover-distinct-colours}

	In this section we verify \Cref{conj:bal-debiasio-dcover} for $r \in
	\{2,3\}$. Most of the difficulty is in the proof for $r=3$, but we
	include a short proof for $r=2$ for completeness.
	Actually, the $r=2$ case (for $n$ large) already follows from a difficult 
	result of Letzter \cite{letzter}, who showed that when $\delta(G) \ge
	3n/4$, the vertices can be partitioned into
	two monochromatic \emph{cycles} of different colours, for every
	$2$-colouring of $G$. Before turning to the 
	proofs, we mention the following construction of Bal and DeBiasio
	\citep{bal-debiasio}, which shows that the minimum degree condition in
	\Cref{conj:bal-debiasio-dcover} cannot be improved.

	\begin{example}
		Let $n \geq 2^r$; we shall define a graph on vertex set $[n]$ as
		follows.
		Partition $[n]$, as equally as possible, into $2^r$ sets which are
		indexed by the sequences $s \in \{0,1\}^r$. We write 
		\[ [n] = \bigcup_{s \in \{0,1\}^r} A(s) \] 
		and define the following, where $\one = (1, \ldots, 1)$.
		\[E = [n]^{(2)} \setminus \bigcup_{s \in \{0,1\}^r} \{ xy : x \in
		A(s), y \in  A(\one - s) \}.\] 
		In other words, we include all edges in the graph except for the edges
		between parts of the partition corresponding to antipodal elements of
		$\{0,1\}^r$.  Now, colour all edges $xy$, where $x \in A(s),y \in
		A(s')$, by the first coordinate on which $s,s'$ agree; e.g.\ the edge
		between $(0,1,0,0)$ and $(1,0,0,1)$ is coloured $3$. 
	
		We now show that $G$ cannot be covered by components of distinct
		colours. Suppose that it can, and note that the $i$-coloured
		components are of the form $\bigcup_{s \in S_i} A(s)$ 
		where $S_i$ is a set of elements that agree on their $i$-th
		coordinate; denote this coordinate by $a_i$. 
		It follows that the vertices of
		$A((1-a_1,\ldots,1-a_r))$ are not covered by any of these components,
		a contradiction. 
	\end{example}
	
	We now prove \Cref{conj:bal-debiasio-dcover} for $r = 2$.
	\begin{lemma} 
		Let $G$ be a $2$-coloured graph with $\delta(G) \ge 3n/4$. Then
		the vertices of $G$ can be covered by a red component and a blue
		component.
	\end{lemma}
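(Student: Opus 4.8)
The plan is to prove this by a short argument exploiting the Erd\H{o}s--Rado observation together with the minimum degree condition. Recall that in any $2$-colouring of a complete graph there is a spanning monochromatic component; here the graph is not complete, but the high minimum degree will let me recover enough structure. Suppose for a contradiction that the vertices of $G$ cannot be covered by one red component and one blue component.

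First I would establish a structural dichotomy for each colour. For the red colour, I claim that either there is a single red component covering all but a controllable set of vertices, or the red components split the vertices into (essentially) two large pieces. The key mechanism is the following: if $R$ is a red component with $(n-|R|)$ vertices outside it, then every vertex of $R$ has at most $n - |R|$ blue-or-missing neighbours outside $R$; since $\delta(G) \ge 3n/4$, a vertex outside $R$ that is non-adjacent to all of $R$ is heavily constrained. More usefully, two vertices $u,v$ lying in different red components must be joined in blue whenever $uv \in E(G)$, and since $d(u), d(v) \ge 3n/4$ their neighbourhoods overlap in at least $n/2$ vertices; this forces vertices in distinct red components to share many common blue neighbours, which tends to merge the blue picture.

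The main line I would pursue is to pick a largest red component $R$ and a largest blue component $B$ and show $V(G) = R \cup B$. Concretely, suppose some vertex $z$ lies outside both $R$ and $B$. Since $d(z) \ge 3n/4$, the vertex $z$ has many neighbours; I would argue that $z$'s red neighbours cannot all avoid $R$ and its blue neighbours cannot all avoid $B$ once $R$ and $B$ are large. The quantitative heart is to show that the largest red component has order at least roughly $3n/4$: indeed, if the largest red component had order less than $3n/4$, take a vertex $x$ in it; its $\ge 3n/4$ neighbours include at least $\ge 3n/4 - |R| + 1 > 0$ neighbours outside $R$, all joined to $x$ in blue, and a symmetric count for any second vertex shows these blue neighbourhoods must interact, forcing a blue component of order $> n/4$ that together with $R$ covers everything. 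I would then combine the size bound $|R| \ge 3n/4$ (or its blue analogue) with the observation that any leftover vertex $z$ has $\ge 3n/4$ neighbours and hence at least $3n/4 - (n - |R|) \ge n/2$ neighbours inside $R$; whichever colour is used on a majority of these edges attaches $z$ to $R$ in red or creates a large blue component absorbing $z$, yielding the desired cover.

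The main obstacle I anticipate is handling the case where \emph{both} the red and the blue structure are genuinely disconnected into two comparable large pieces simultaneously, so that no single monochromatic component is already very large. In that regime the simple "largest component is huge" shortcut fails, and I would instead need to use the minimum degree condition more carefully, showing that the red bipartition and the blue bipartition cannot be "transverse" in a way that leaves a part uncovered --- essentially an argument that with four quadrants determined by the two bipartitions, the $3n/4$ degree condition forces two of the quadrants to be empty or to be absorbed into a common component. I expect this case analysis, rather than any single clever inequality, to be where the real work lies; the bound $3n/4$ should be exactly tight enough (matching the construction with $r=2$) to rule out the bad transverse configuration.
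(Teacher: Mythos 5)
Your proposal has a genuine gap, and it sits exactly where you yourself point: the intermediate regime. Your route needs a monochromatic component of order at least $3n/4$, and your argument for producing one breaks down when the largest red component $R$ satisfies $n/2 + 2 \le |R| < 3n/4$. In that range the "symmetric count" fails: each of two vertices of $R$ has at least $3n/4 - |R| + 1$ blue neighbours outside $R$, but there are $n - |R|$ vertices outside $R$, and $2(3n/4 - |R| + 1) - (n - |R|) = n/2 - |R| + 2 \le 0$, so their blue neighbourhoods outside $R$ need not meet, and you cannot conclude that $R$ lies in a single blue component (nor that anything "covers everything"). The case you defer to the end ("both colours disconnected into comparable large pieces", the "four quadrants") is precisely this regime, and you give no actual argument for it, only the expectation that a case analysis will work. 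The paper closes it with a short trick your sketch is missing: it needs only a red component $R$ of order greater than $n/2$ (obtained by applying the common-neighbour count to a red component of order \emph{at most} $n/2$, which exists whenever red is disconnected --- note it uses a small component, not the largest, which is why its count never fails). Then for any $x \notin R$, all edges from $x$ to $R$ are blue, so $|C_b(x) \cap R| \ge |R| - n/4 > n/4$; and since $\delta(G) \ge 3n/4$, \emph{every} vertex of $G$ has a neighbour in the set $C_b(x) \cap R$. Whichever colour that edge has, the vertex lies in $R$ (if red) or in $C_b(x)$ (if blue), so $R$ and $C_b(x)$ cover $V(G)$ with no case analysis at all.

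A second, smaller defect: even granting $|R| \ge 3n/4$, your closing step is off. You say a leftover vertex $z$ attaches to $R$ "in whichever colour is used on a majority" of its edges; but edges from $z \notin R$ to $R$ cannot be red at all, so they are all blue. More importantly, the lemma asks for a cover by \emph{one} red and \emph{one} blue component, so you must show that all leftover vertices land in the \emph{same} blue component; "creates a large blue component absorbing $z$" does not give this. The fact is true --- any two vertices outside $R$ have at least $2(|R| - n/4) - |R| = |R| - n/2 \ge n/4$ common blue neighbours inside $R$ --- but this computation is absent from your sketch, and without it (or the paper's intersection trick above) the proof is not complete.
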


	\begin{proof}
		We first show that there is a monochromatic component of order
		greater than $n/2$. If $G$ is red connected we are done. Hence, there
		exists a red component $R$ with $|R| \leq n/2$. Then, any two
		vertices $u, w \in R$ have a common blue neighbour, as $|N_b(u) \cap
		N_b(w) \cap R| \geq 2\cdot(3n/4-(|R|-1)) - (n - |R|) > 0$. So $R
		\subseteq C_b(u)$ and $C_b(u)$ is a blue component of order at least
		$3n/4$, as required. 

		Without loss of generality, there is a red component $R$ of order
		larger than $n/2$.  Note that there is a vertex $x$ which is not in
		$R$ (otherwise we are done), and $|N_b(x)\cap R| = |N(x)\cap R| >
		n/4$, as $x$ does not send red edges to $R$. In particular,
		$|C_b(x)\cap R| > n/4$. It follows that every vertex sends at least
		one edge to $C_b(x) \cap R$ and thus the components $R$ and $C_b(x)$
		cover the whole graph.
	\end{proof}

	We now turn to prove \Cref{thm:cover-distinct-three}, which is the case of
	three colours in \Cref{conj:bal-debiasio-dcover}.
	\thmCoverDistinctThree*

	\begin{proof} We begin with a series of preparatory claims
		(\Cref{claim:three-intersection,claim:two-intersection,%
		claim:one-intersection}).
		
	\begin{claim} \label{claim:three-intersection}
		If there are three monochromatic components of distinct colours whose
		intersection has order at least $n/8$, then the vertices can be
		covered by monochromatic components of distinct colours. 
	\end{claim}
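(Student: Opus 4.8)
The plan is to prove this directly, by showing that the three given components already cover every vertex. Write $R$, $B$, $Y$ for the red, blue and yellow components whose triple intersection is large, and set $W = R \cap B \cap Y$, so that $|W| \ge n/8$ by hypothesis.

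The first step is a trivial consequence of the minimum degree condition: \emph{every} vertex of $G$ has a neighbour in $W$ (or lies in $W$ itself). Indeed, a vertex $v$ has at most $(n-1) - 7n/8 = n/8 - 1$ non-neighbours among $V(G) \setminus \{v\}$; since $|W| \ge n/8$, if $v \notin W$ then the $|W|$ vertices of $W$ cannot all be non-neighbours of $v$, so $v$ has a neighbour $w \in W$. This is the only place the bound $\delta(G) \ge 7n/8$ is used, and it is exactly what makes the threshold $n/8$ appear in the statement.

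The key observation is that such a neighbour forces $v$ into one of the three components. Fix $v \notin W$ and a neighbour $w \in W$, and look at the colour of the edge $vw$. If $vw$ is red, then $v \in C_r(w)$; but $w \in W \subseteq R$, so $C_r(w) = R$ and hence $v \in R$. Running the same argument for the blue and yellow cases shows that in every case $v \in R \cup B \cup Y$. The vertices of $W$ lie in $R \cap B \cap Y$ and so are covered as well, whence $V(G) = R \cup B \cup Y$. Since $R$, $B$, $Y$ are monochromatic components of distinct colours, this is the desired cover.

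I do not expect any genuine obstacle in this argument: once the large common intersection $W$ is in hand, the minimum-degree bound immediately supplies a neighbour of each remaining vertex inside $W$, and the colour of the connecting edge pins that vertex to the matching one of $R$, $B$, $Y$. The subtlety of the overall theorem is entirely in \emph{producing} three distinctly coloured components with a common intersection of size at least $n/8$; this claim simply records that once such components exist, we are done.
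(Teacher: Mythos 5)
Your proof is correct and is essentially identical to the paper's argument: the minimum degree condition guarantees every vertex outside $U = R \cap B \cap Y$ has a neighbour in $U$, and the colour of that edge places the vertex in the corresponding component. You have simply spelled out in more detail the colour-case analysis that the paper leaves implicit.
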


	\begin{proof-claim}[ of \Cref{claim:three-intersection}]
		Suppose that $R$, $B$ and $Y$ are red, blue and yellow components, whose
		intersection $U =  R \cap B \cap Y$ has size at least
		$n/8$. Then, by the minimum degree condition, every vertex not in
		$U$ has a neighbour in $U$, implying that every vertex in the
		graph belongs to at least one of $R$, $B$ and $Y$, as required. 
	\end{proof-claim}

	\begin{claim} \label{claim:two-intersection}
		If there are two monochromatic components of distinct colours
		whose intersection has order at least $n/4$, then the vertices of $G$
		may be covered by monochromatic components of distinct colours.
	\end{claim}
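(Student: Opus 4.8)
The plan is to reduce to \Cref{claim:three-intersection} whenever possible, and otherwise to show that the two given components already cover everything on their own. By symmetry I may assume that the two components of distinct colours are a red component $R$ and a blue component $B$, and I set $U = R \cap B$, so that $|U| \ge n/4$. The single most useful observation is that, because of the minimum degree condition, every vertex of $G$ has many neighbours in $U$: since $|V(G) \setminus U| \le 3n/4$, every vertex $v$ satisfies $|N(v) \cap U| \ge \delta(G) - |V(G) \setminus U| \ge 7n/8 - 3n/4 = n/8$.

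Next I would split into two cases according to how the yellow components meet $U$. Suppose first that some yellow component $Y$ satisfies $|Y \cap U| \ge n/8$. Since $U \subseteq R \cap B$, the three monochromatic components $R$, $B$, $Y$ (of distinct colours) have common intersection $R \cap B \cap Y = Y \cap U$ of order at least $n/8$, so \Cref{claim:three-intersection} immediately yields the desired cover.

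It remains to treat the case in which every yellow component meets $U$ in fewer than $n/8$ vertices. Here I claim that $R$ and $B$ alone cover $V(G)$. Fix any vertex $v$; it has at least $n/8 \ge 1$ neighbours in $U$, and I examine the colours of the corresponding edges. If $v$ is joined to $U$ by a red edge then, as $U \subseteq R$, the vertex $v$ lies in the red component $R$; likewise a blue edge from $v$ to $U$ places $v$ in $B$. If neither occurs, then all of the (at least $n/8$) neighbours of $v$ in $U$ are joined to $v$ by yellow edges, so the yellow component $C_y(v)$ contains all of them and hence $|C_y(v) \cap U| \ge n/8$, contradicting the case assumption. Thus every vertex lies in $R \cup B$, so $R$ together with $B$ covers $V(G)$.

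There is no serious obstacle in this argument: the only points requiring care are the bookkeeping of the minimum degree bound (to guarantee the $n/8$ neighbours in $U$, where one should note the bound holds whether or not $v \in U$) and the clean dichotomy on the yellow components, which lets one either invoke \Cref{claim:three-intersection} or else conclude that $R$ and $B$ suffice. The mild point to keep in mind is that the threshold $n/4$ in the hypothesis is exactly what makes the induced neighbourhood bound $n/8$ match the threshold appearing in \Cref{claim:three-intersection}.
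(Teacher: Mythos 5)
Your proof is correct and follows essentially the same route as the paper's: both use the minimum degree condition to show every vertex has at least $n/8$ neighbours in $U = R \cap B$, observe that a vertex outside $R \cup B$ can only send yellow edges into $U$, and then reduce to \Cref{claim:three-intersection}. The only difference is presentational—you phrase the dichotomy over yellow components, while the paper phrases it as ``either $R \cup B = V(G)$ or some yellow component meets $U$ in at least $n/8$ vertices''—which is the same argument in contrapositive form.
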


	\begin{proof-claim}[ of \Cref{claim:two-intersection}]
		Suppose that $R$ and $B$ are red and blue components whose
		intersection $U = R \cap B$ has size at least $n/4$.
		We show that one of the following holds.
		
		\begin{enumerate}
			\item
				$R \cup B = V(G)$; 
			\item
				there is a yellow component whose intersection 
				with $R \cap B$ has size at least $n/8$. 
		\end{enumerate}
		
		Suppose that the first assertion does not hold. Then there is a
		vertex $u \notin R \cup B$. By the minimum degree condition, $u$
		sends at least $n/8$ edges to $R \cap B$, but these edges cannot be
		red or blue (because $u \notin R \cup B$), hence they are yellow, so
		by picking $Y$ to be the yellow component containing $u$, the second
		assertion holds.  If the first assertion holds, we are done
		immediately; otherwise, we are done by
		\Cref{claim:three-intersection}. 
	\end{proof-claim}

	\begin{claim} \label{claim:one-intersection}
		If there is a monochromatic component of order at least $n/2$, then
		the vertices can be covered by three monochromatic components of
		distinct colours.
	\end{claim}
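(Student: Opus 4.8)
The plan is to reduce everything to the preparatory \Cref{claim:three-intersection}. By symmetry assume the large component is a red component $R$ with $|R| \ge n/2$, and set $W = V(G) \setminus R$, so that $|W| \le n/2$. If $W = \emptyset$ then $R$ alone covers $V(G)$ and we are done, so assume $W \neq \emptyset$ and fix an arbitrary vertex $a \in W$. Since $R$ is a red component and no vertex of $W$ lies in $R$, every edge between $W$ and $R$ is blue or yellow. I will show that, unless we can already produce a cover via \Cref{claim:three-intersection}, every vertex of $W$ is joined to $a$ inside a common blue or a common yellow component; this yields $W \subseteq C_b(a) \cup C_y(a)$, and then the three distinct-coloured components $R$, $C_b(a)$ and $C_y(a)$ cover $V(G)$.

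The key local step is to analyse, for each $w \in W \setminus \{a\}$, the common neighbourhood of $a$ and $w$ inside $R$. Because $\delta(G) \ge 7n/8$, we have $|N(a) \cap N(w)| \ge 3n/4$, and intersecting with $R$ (using $|W| \le n/2$) gives $|N(a) \cap N(w) \cap R| \ge n/4$. Every such common neighbour $z \in R$ is joined to each of $a$ and $w$ by a blue or yellow edge, so the pair $(c(az), c(wz))$ takes one of the four values in $\{b,y\}^2$. If some $z$ sends blue edges to both $a$ and $w$, then $a$ and $w$ lie in a common blue component, so $w \in C_b(a)$; likewise a $z$ sending yellow to both places $w \in C_y(a)$.

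The main obstacle—and the reason one cannot argue by naive connectivity—is that $R$ carries a red component's worth of red edges, so two vertices of $R$ need not be linked in blue or in yellow even when they share many common neighbours. This is exactly the situation \Cref{claim:three-intersection} is designed to handle. If neither of the two "monochromatic" colour pairs occurs for $(a,w)$, then all $\ge n/4$ common neighbours in $R$ carry one blue and one yellow edge, so by pigeonhole at least $n/8$ of them, say, send a blue edge to $a$ and a yellow edge to $w$. Each such $z$ then lies in $C_b(a) \cap C_y(w) \cap R$, an intersection of three monochromatic components of distinct colours of order at least $n/8$, and \Cref{claim:three-intersection} finishes the proof outright.

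Thus for each $w \in W$ we are either done immediately via \Cref{claim:three-intersection}, or $w \in C_b(a) \cup C_y(a)$. In the remaining case $W \subseteq C_b(a) \cup C_y(a)$, and since $R$ covers $R$, the three distinct-coloured components $R$, $C_b(a)$, $C_y(a)$ cover $V(G)$, as required. The only routine points to verify are the degree computations behind the common-neighbourhood bound and the pigeonhole constant, together with the trivial case $|W| \le 1$.
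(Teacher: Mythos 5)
Your proof is correct and follows essentially the same route as the paper's: both reduce to \Cref{claim:three-intersection} by considering two vertices outside $R$, observing that all their edges into $R$ are blue or yellow, producing a set of at least $n/4$ vertices of $R$ reached in blue/yellow from both, and pigeonholing to obtain a triple intersection of size at least $n/8$ (with the alternative outcome that $R$ together with the blue and yellow components of the outside vertex already cover $V(G)$). The only cosmetic difference is that you work with common neighbourhoods and the edge-colour patterns $(c(az),c(wz))$, whereas the paper intersects $\left(C_b(u)\cup C_y(u)\right)\cap\left(C_b(w)\cup C_y(w)\right)\cap R$ and uses disjointness of distinct same-coloured components; your variant avoids needing that disjointness.
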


	\begin{proof-claim}[ of \Cref{claim:one-intersection}]
		As in the proof of \Cref{claim:two-intersection}, we show that
		one of the following assertions holds, where $R$ is a red
		component of order at least $n/2$.
		\begin{enumerate}
			\item 
				$R = V(G)$;
			\item 
				there are monochromatic components $B$ and $Y$ 
				in colours blue and yellow respectively, such that 
				$R \cup B \cup Y = V(G)$;
			\item 
				there are monochromatic components $B$ and $Y$ 
				in colours blue and yellow respectively, such that 
				$|R \cap B \cap Y| \ge n/8$.
		\end{enumerate}
		Suppose that $R \neq V(G)$ and let $u \notin R$. Consider the
		blue and yellow components, $B$ and $Y$, containing $u$. 
		By the minimum degree
		condition, $u$ sends at least $|R| - n/8$ edges to $R$, none
		of which are red. So
		$|(B\cup Y)\cap R|\geq |R|-n/8$. Suppose that $R$, $B$ and
		$Y$ do not cover the whole graph. Let
		$w \notin R \cup B \cup Y$, and denote the blue and yellow
		components containing $w$ by $B'$ and $Y'$.
		By the same argument
		as before, $|(B' \cap Y') \cap R|\geq |R|-n/8$, which
		implies the following. 
		\begin{equation*}
			\left|(B \cup Y) \cap (B' \cup Y') \cap R\right| \geq 
			|R| - n/4 \ge n/4.	
		\end{equation*}
		Since $B \cap B' =\emptyset$ and $Y \cap Y' = \emptyset$, 
		either $|B \cap Y' \cap R| \ge n/8$ or $|B'
		\cap Y \cap R| \ge n/8$. This completes the proof that one of
		the above assertions holds. If one of the first two assertions holds,
		we are done immediately; and if the third assertion holds,
		\Cref{claim:one-intersection} follows
		from \Cref{claim:three-intersection}.
	\end{proof-claim}

	Henceforth, we assume $G$ cannot be covered by monochromatic
	components of distinct colours.

	\begin{claim} \label{claim:large-compt}
		There are two monochromatic components of distinct colours of order
		at least $3n/8$.
	\end{claim}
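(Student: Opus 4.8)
The plan is to argue by contradiction, working under the standing assumption that $G$ admits no cover by monochromatic components of distinct colours. By \Cref{claim:one-intersection} this already tells us that every monochromatic component has order less than $n/2$, and by \Cref{claim:two-intersection} (resp.\ \Cref{claim:three-intersection}) no two (resp.\ three) components of distinct colours meet in $n/4$ (resp.\ $n/8$) vertices. I would first locate a single monochromatic component of order at least $3n/8$, and then, using it as an anchor, produce a second one of a different colour.

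For the first step, suppose towards a contradiction that every monochromatic component has order less than $3n/8$. Then for every vertex $v$ we have $d_r(v),d_b(v),d_y(v)<3n/8$, and since these sum to at least $7n/8$ each of them in fact exceeds $n/8$; in particular each colour class has at least three components, all of order strictly between $n/8$ and $3n/8$. I would then pick two vertices $u,v$ lying in distinct red components: they have at least $3n/4$ common neighbours, and for every common neighbour $z$ the edges $uz$ and $vz$ cannot both be red. The main difficulty appears right here: the bound $\delta(G)\ge 7n/8$ yields common neighbourhoods of size exactly $3n/4=2\cdot 3n/8$, so a crude count balances perfectly against the component bound and yields no contradiction. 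To break this tie I would use the connectivity of the components rather than their sizes alone --- for instance by choosing vertices $u,v,w$ in three pairwise-distinct components simultaneously in each of the three colours, so that every common neighbour forces a genuine permutation of colours on the edges $uz,vz,wz$, and then playing this off against the impossibility of a cover. This is the technical heart of the first step.

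For the second step, let $R$ be a red component with $3n/8\le |R|<n/2$ and set $A=V(G)\setminus R$, so $|A|>n/2$. Since $R$ is a red component, every edge between $A$ and $R$ is blue or yellow; moreover each $u\in A$ has at least $|R|-n/8\ge n/4$ such edges into $R$, and each $r\in R$ has at least $3n/8$ neighbours in $A$ (using $|R|<n/2$). Thus the blue/yellow edges form a dense bipartite graph between $A$ and $R$ with large minimum degree on both sides. If some vertex ever has at least $3n/8$ neighbours of a single colour into the opposite side, then its component in that colour has order at least $3n/8$ and, being blue or yellow, furnishes the desired second component. Otherwise the blue and yellow split is balanced at every vertex, and I would aggregate: each blue (resp.\ yellow) component meets $R$ in fewer than $n/4$ vertices by \Cref{claim:two-intersection}, and combining this intersection bound with the two-sided minimum degree of the bipartite graph should force one monochromatic component to collect at least $3n/8$ vertices in total. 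Upgrading a large \emph{intersection with $R$} into a component of large \emph{order} is the main obstacle in this step, and again it is precisely the tightness of the constants that must be overcome, using the no-cover hypothesis and the intersection bounds of the preparatory claims rather than size-counting alone.
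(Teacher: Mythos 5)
Your proposal is not a proof: both of its steps end at an acknowledged impasse, and the devices you offer for breaking the ties are neither carried out nor clearly workable. In the first step, your fallback idea --- a triple $u,v,w$ lying pairwise in distinct components \emph{simultaneously} in all three colours --- has two problems. Its existence is unclear: the no-cover hypothesis lets you pick a vertex avoiding one prescribed component of each colour (three components in total), but to choose $w$ you would need to avoid the six components of $u$ and $v$. And even granting such a triple, the count it yields is too weak: every common neighbour $z$ of $u,v,w$ sends exactly one edge of each colour into $\{u,v,w\}$, so summing over the at least $n - 3n/8$ common neighbours gives some vertex red degree at least $5n/24$, still short of $3n/8$. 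In the second step, "aggregate \dots should force" is not an argument: vertex-by-vertex, a vertex of $R$ sending more than $3n/8$ blue-or-yellow edges into $A$ only guarantees a monochromatic star of size $3n/16$, and \Cref{claim:two-intersection} gives \emph{upper} bounds on intersections, which work against you rather than for you when you try to assemble a large component. So the two "technical hearts" you point to are genuine holes, not deferred details.

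The idea you are missing sidesteps both impasses and does not even split into your two steps. Under the no-cover assumption, \Cref{claim:one-intersection} implies every colour is disconnected (a spanning monochromatic component would have order $n \ge n/2$). Fix a pair of colours, say red and blue; since yellow is disconnected, there is a bipartition $\{X,Y\}$ of $V(G)$ with no yellow crossing edges, and without loss of generality $|X| \le |Y|$ and at least half of the crossing edges are red. Let $H$ be the bipartite graph of red crossing edges and set $s(xy) = d_H(x) + d_H(y)$ for $xy \in E(H)$. The key point is that for a red edge $xy$, the sets $N_H(x) \subseteq Y$ and $N_H(y) \subseteq X$ are disjoint and lie in one red component, so any edge with $s(xy) \ge 3n/8$ produces a red component of order at least $3n/8$. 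Now double count: by Cauchy--Schwarz, $\frac{1}{e(H)}\sum_{xy \in E(H)} s(xy) = \frac{1}{e(H)}\bigl(\sum_{x \in X} d_H(x)^2 + \sum_{y \in Y} d_H(y)^2\bigr) \ge e(H)\bigl(\tfrac{1}{|X|}+\tfrac{1}{|Y|}\bigr)$, and the minimum degree condition gives $e(H) \ge \tfrac12 |X|\,(|Y|-n/8)$, whence the average of $s$ is at least $3n/8$. This shows that for \emph{every} pair of colours, one of the two has a component of order at least $3n/8$; applying it to the three pairs, the three answers cannot all be one colour (no colour lies in all three pairs), so two distinct colours have such components. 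Note how this breaks exactly the tightness you ran into: the gain comes not from counting at a single vertex, but from the fact that the two endpoints of one red crossing edge have disjoint red neighbourhoods across the bipartition, so their degrees \emph{add}.
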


	\begin{proof-claim}[ of \Cref{claim:large-compt}]
		We will show that for every pair of colours there is a
		monochromatic component of order at least $3n/8$ in one of the two
		colours; the claim easily follows from this fact. Let the
		two colours be red and blue.  Since $G$ is not connected in yellow,
		we may find a partition $\{X, Y\}$ of the vertices of $G$ such that no $X -
		Y$ edges are yellow.  Without loss of generality, at least half the
		edges between $X$ and $Y$ are red; set $H = G_r[X, Y]$, denote $d(x)
		= d_H(x)$ for any vertex $x$, and given an
		edge $xy$ in $H$, set $s(xy) = d(x) + d(y)$.  We will show that there
		is an edge $xy$ with $s(xy) \ge 3n/8$; note that this would imply the
		existence of a red component of order at least $3n/8$, as required.
		Put $e = e(H)$ and, without loss of generality, we assume that $|X|
		\le |Y|$. We have
		\begingroup
		\addtolength{\jot}{.5em} 
		\begin{align*}
			\frac{1}{e}\sum_{xy \in E(H)}s(xy) = \,\, & 
			\frac{1}{e}\sum_{xy \in E(H)}(d(x) + d(y)) \\
			= \,\, & \frac{1}{e} \left( \sum_{x \in X}d(x)^2 + \sum_{y \in
			Y}d(y)^2\right) \\
			\ge \,\, & \frac{1}{e}\left(\frac{\big(\sum_{x \in
			X}d(x)\big)^2}{|X|} 
			+ \frac{\big(\sum_{y \in Y}d(y)\big)^2}{|Y|}\right) \\
			= \,\, & e \left(\frac{1}{|X|} + \frac{1}{|Y|}\right) \\
			\ge \,\, & \frac{1}{2}|X|\left(|Y| - n/8\right)\left(\frac{1}{|X|} +
			\frac{1}{|Y|}\right) \\
			= \,\, & \frac{1}{2}\left(|Y| - n/8 + |X| - \frac{|X| \cdot n/8}{n -
			|X|}\right) \\
			\ge \,\,& 3n/8.
		\end{align*}
		\endgroup
		Indeed, the first inequality follows from the Cauchy-Schwarz
		inequality; the second follows from the minimum degree condition and
		the assumption that red is the majority colour between $X$ and $Y$;
		and the last inequality follows since $|X| + |Y| = n$ and the
		expression $\frac{|X|}{n - |X|}$ is maximised when $|X| = n/2$ (as we
		have the constraint $|X| \le n/2$).
		
		This chain of inequalities shows that the average value of $s(xy)$ is
		at least $3n/8$; in particular, there is a red component of order at
		least $3n/8$, as required. 
	\end{proof-claim}
		
	We remark that the idea of double counting $s(xy)$ as in the proof of the
	previous claim originated in a paper by Liu, Morris and Prince
	\cite{liu-morris-prince}.
		
	By the previous claim, we may assume that $R$ and $B$ are red and blue
	components of order at least $3n/8$.

	\begin{claim} \label{claim:difference} 
		Either $|R \setminus B| < n/4$ or $|R \setminus B| < n/4$.
	\end{claim}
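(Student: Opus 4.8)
The plan is to argue by contradiction against the corrected statement: the displayed claim clearly contains a typo, and the intended disjunction is that either $|R \setminus B| < n/4$ or $|B \setminus R| < n/4$. So I would suppose for contradiction that both $|R \setminus B| \ge n/4$ and $|B \setminus R| \ge n/4$. Before starting, I record the consequence of the standing assumption that I will need: since $G$ cannot be covered by monochromatic components of distinct colours, \Cref{claim:one-intersection} tells us that \emph{no monochromatic component has order at least $n/2$}. (The bound $|R \cap B| < n/4$ from \Cref{claim:two-intersection} is also available, though it turns out not to be needed.)

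Set $R' = R \setminus B$ and $B' = B \setminus R$; these are disjoint and, by the contradiction hypothesis, each has order at least $n/4$. The key structural observation is that \emph{every} edge between $R'$ and $B'$ must be yellow: a red edge between $u \in R'$ and $v \in B'$ would force $v$ into the red component $R$, contradicting $v \notin R$, and a blue such edge would force $u$ into the blue component $B$, contradicting $u \notin B$. Thus all $R'$--$B'$ edges are yellow.

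Now I would invoke the minimum degree condition $\delta(G) \ge 7n/8$, which says every vertex has fewer than $n/8$ non-neighbours. Since $|B'| \ge n/4$, any two vertices $u_1, u_2 \in R'$ have a common neighbour $w \in B'$ (their combined non-neighbourhoods within $B'$ number fewer than $n/8 + n/8 = n/4 \le |B'|$), and the edges $u_1 w$, $u_2 w$ are yellow; hence $u_1$ and $u_2$ lie in a common yellow component. As this holds for every pair, all of $R'$ lies in a single yellow component $Y$. Likewise each $v \in B'$ is yellow-adjacent to at least $|R'| - n/8 > 0$ vertices of $R'$, so $B' \subseteq Y$ as well. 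Then $|Y| \ge |R'| + |B'| \ge n/2$, contradicting the fact that no monochromatic component has order at least $n/2$.

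The only delicate point is that the arithmetic is tight: the bound $n/8$ on non-neighbours and the bound $n/4$ on $|R'|, |B'|$ combine to give exactly $n/2$. I would therefore be careful to use the \emph{strict} inequality "fewer than $n/8$ non-neighbours" (which is what $\delta(G) \ge 7n/8$ yields, since a vertex has at most $n - 1 - 7n/8 < n/8$ non-neighbours) to push the common-neighbour and connectivity arguments across the boundary. With that in hand everything else is routine.
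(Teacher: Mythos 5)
Your proof is correct and follows essentially the same route as the paper's: you rightly read the claim as ``$|R \setminus B| < n/4$ or $|B \setminus R| < n/4$'' (the statement contains a typo), observe that all edges between $R \setminus B$ and $B \setminus R$ are yellow, use the minimum degree to find common neighbours placing $R \triangle B$ in a single yellow component of order at least $n/2$, and contradict \Cref{claim:one-intersection}. Your write-up is in fact slightly more careful than the paper's, since you verify the tight arithmetic (at most $n/8 - 1$ non-neighbours per vertex) explicitly.
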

	\begin{proof-claim}[ of \Cref{claim:difference}]
		Assume that $|R \setminus B| \geq n/4$ and $|R \setminus B| \geq
		n/4$. Note that every edge between the disjoint sets $R \setminus B$
		and $B \setminus R$ is yellow. Furthermore, any two vertices in $B
		\setminus R$ has a common neighbour in $R \setminus B$, and vice
		versa.
		Therefore $B \triangle R$ in contained in a yellow component; in
		particular, there is ia yellow components of order at least $n/2$,
		a contradiction, by \Cref{claim:one-intersection}.
	\end{proof-claim}
	
	By the previous claim, we may assume that $|B \setminus R| < n/4$.
	Hence, $|B \cup R|  = |R| + |B \setminus R| < n/2 + n/4 = 3n/2$, by
	\Cref{claim:one-intersection}. Therefore, the set $W = V(G) \setminus (R
	\cup B)$ has size larger than $n/4$. Since all edges between $R \cap B$
	and $W$ are yellow, it follows that every two vertices in $R \cap B$ have
	a common yellow neighbour in $W$ and hence $R \cap B$ is contained in a
	yellow component.  Thus, \Cref{claim:three-intersection} implies that
	$|R \cap B| < n/8$. It follows that $|B| = |B \cap R| + |B \setminus R| <
	3n/8$, in contradiction with the choice of $B$. This completes the proof
	of \Cref{thm:cover-distinct-three}. 
\end{proof}

\section{Concluding remarks} \label{sec:conclusion}

	As possible lines for future research, we remind the reader of
	\Cref{conj:bal-debiasio-dcover} by Bal and DeBiasio \cite{bal-debiasio};
	in this paper we proved this conjecture for $r \le 3$.
	\conjBalDebiasioDcover*
	
	Another conjecture stated by Bal and Debiasio \citep{bal-debiasio}
	concerns the minimum degree needed to ensure that an $r$-coloured
	graph can be covered by at most $r$ monochromatic components, whose
	colours need not to be distinct.
	\begin{conjecture}
		Let $G$ be an $r$-coloured graph on $n$ vertices with $\delta(G) \ge
		\frac{r(n-r-1)+1}{r+1}$. Then the vertices of $G$ can be covered by
		at most $r$ monochromatic components.
	\end{conjecture}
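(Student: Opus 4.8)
The plan is to argue by contradiction and to generalise the covering machinery developed for \Cref{thm:cover-distinct-three} and \Cref{prop:two-colouring}, using as the main driver the following clean consequence of the degree hypothesis. If $\delta(G)\ge \frac{r(n-r-1)+1}{r+1}$, then every vertex has at most $\frac{n+r^2-2}{r+1}$ non-neighbours, so for $n$ large the non-neighbourhoods of any $r$ vertices cannot cover $V(G)$; that is, \emph{every $r$ vertices have a common neighbour}, and more generally every set of size a little above $\frac{n}{r+1}$ dominates $V(G)$. This domination property is exactly what one uses to close up covers in the proofs of \Cref{claim:three-intersection,claim:two-intersection,claim:one-intersection}, and it should play the same role here.

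First I would establish that many large monochromatic components exist, generalising \Cref{claim:large-compt}. For any colour $c$ in which $G$ is disconnected I would choose a partition $\{X,Y\}$ across which no $c$-edge runs, and apply the Liu--Morris--Prince double counting of $s(xy)=d(x)+d(y)$ \cite{liu-morris-prince} to the majority non-$c$ colour on the bipartite graph between $X$ and $Y$. Since there are $r-1$ competing colours the majority has at least a $\frac{1}{r-1}$ fraction of the cross edges, and the double count should produce a monochromatic component of order of magnitude $n/r$ in some colour other than $c$; ranging over colours then yields a family of large monochromatic components in several distinct colours, which are the candidate building blocks of the cover.

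The heart of the argument would then be an intersection analysis generalising \Cref{claim:three-intersection,claim:two-intersection,claim:one-intersection}. The aim is to show that if no $r$ of these components cover $V(G)$, then the large components must have small pairwise intersections, and must carve the vertex set into roughly $r+1$ blocks behaving like the parts of a complete $(r+1)$-partite graph in which antipodal parts are joined in a single colour---precisely the tight construction \Cref{ex:cover-t}. Quantifying \enquote{small intersection} against the domination property above should then force some vertex to have strictly more than $\frac{n+r^2-2}{r+1}$ non-neighbours, contradicting the degree bound.

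The main obstacle is that, unlike the distinct-colours setting of \Cref{thm:cover-distinct-three}, here a component may repeat a colour, so the natural induction on $r$ fails: deleting one monochromatic component does not reduce the number of colours present on the rest of the graph, and the induced subgraph need not satisfy a scaled degree hypothesis. Consequently one cannot peel off colours one at a time, and must instead control up to $r$ components spread across all $r$ colours \emph{simultaneously}. Turning the heuristic \enquote{the only obstruction is the $(r+1)$-partite example \Cref{ex:cover-t}} into a rigorous stability statement valid for every $r$---rather than only the cases $r\le 3$, where the interaction is simple enough to resolve by hand---is exactly the difficulty that keeps this a conjecture; the cases $r=1$ (ordinary connectivity from $\delta\ge\frac{n-1}{2}$) and $r=2$ (\Cref{prop:two-colouring}, with the resulting bound $\frac{2n-5}{3}$) are essentially the only regimes where the structure is transparent enough to handle directly.
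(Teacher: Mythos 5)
This statement is one of the paper's \emph{open conjectures}: it is recorded in the concluding remarks as a conjecture of Bal and DeBiasio, and the paper proves it nowhere (only its $r=2$ instance follows from \Cref{prop:two-colouring} with $t=2$, where the two bounds coincide at $\frac{2n-5}{3}$). So there is no proof in the paper to compare against, and more importantly, your submission is not a proof either: both of its central steps --- the generalised double-counting of \Cref{claim:large-compt} producing components of order roughly $n/r$, and the intersection/stability analysis that is supposed to force the extremal structure --- are left as declared intentions rather than arguments, and you yourself concede that making the stability step rigorous is \enquote{exactly the difficulty that keeps this a conjecture}. That is an accurate self-assessment of a research plan, but it means the proposal has a gap precisely where all the mathematical content would have to live; nothing in it could be checked or repaired into a proof of the statement.

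Two concrete corrections to the plan itself. First, your identification of \Cref{ex:cover-t} as the tight construction is wrong for $r\ge 3$: taking $t=r$ there gives a $2$-coloured graph of minimum degree about $\frac{2n}{r+1}$, far below the conjectured threshold $\frac{r(n-r-1)+1}{r+1}\approx\frac{rn}{r+1}$, and its structure (a path-like chain of blobs, not a complete multipartite graph with antipodal pairs) does not match the description you give; you appear to have conflated it with the $2^r$-partition example for \Cref{conj:bal-debiasio-dcover}. A correct stability target would have to be a genuinely $r$-coloured extremal example, which the paper does not even describe. Second, it is worth seeing precisely where the paper's own covering technique stalls, since any proof must get past this point. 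The argument of \Cref{prop:two-colouring} generalises cleanly in its first half: if $u_1,\dots,u_{r+1}$ lie pairwise in distinct monochromatic components of every colour, then $U=\{u_1,\dots,u_{r+1}\}$ is independent, every outside vertex sends at most one edge of each colour into $U$, and so
\begin{equation*}
	(r+1)\,\delta(G) \le e\big(U, V(G)\setminus U\big) \le r(n-r-1),
\end{equation*}
contradicting the degree hypothesis. Hence the $r$-partite $r$-uniform hypergraph of component incidences has matching number at most $r$. But to conclude one needs its cover number to be at most its matching number, and for $r\ge 3$ no such tool exists: even Ryser's conjecture only asserts $\tau\le (r-1)\nu$, and Aharoni's theorem for $r=3$ gives $\tau\le 2\nu$, both yielding covers far larger than $r$. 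Replacing K\"onig's theorem in this step is the actual open problem, and neither the paper nor your proposal offers a candidate for doing so.
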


	We further recall our \Cref{conj:partitioning-two-colours}.
	\conjPartitioningTwoColours*
	
	In this conjecture, we attempt to determine the minimum degree condition
	needed to guarantee the existence of a partition of a $2$-coloured graph
	into $t$ monochromatic connected subgraphs. This is a generalisation of
	\Cref{thm:two-cols-partn} which determines this condition for a partition
	into two monochromatic connected sets.
	To close the section, we  prove \Cref{prop:two-colouring}, a weaker
	version of \Cref{conj:partitioning-two-colours}, where instead of
	partitioning the vertices, we aim to \emph{cover} the vertices.

	\begin{proof-claim}[ of \Cref{prop:two-colouring}]
		We use the link with K\"onig's Theorem
		first noted by Gy\'{a}rf\'{a}s \citep{gyarfas}. 
		Let $G$ be a $2$-coloured
		graph with minimum degree at least $\frac{2n-2t-1}{t+1}$.
		Let $\R$ be the collection of red components (some of which may be
		singletons, if there are vertices that are not incident with any red
		edges), and let $\B$ be the collection of blue edges.
		Define an auxiliary bipartite graph $H = (\R, \B, E)$, where for $R
		\in \R$ and $B \in \B$, we have $RB \in E$ if and only if $R
		\cap B \neq \emptyset$.
	
		We claim that there is no matching of size larger than $t$. Indeed,
		suppose that $\{R_1 B_1, \ldots, R_{t+1} B_{t+1}\}$ is a matching
		of size $t + 1$. Let $u_i \in R_i \cap B_i$, for $i \in [t+1]$
		and $U = \{u_1, \ldots, u_{t+1}\}$. Then the vertices of $U$ are in
		distinct red and blue components. In particular, $U$ is independent,
		so the number of edges between $U$ and $V(G) \setminus U$ is at least
	 $2n-2t-1$.  On the other hand, no vertex sends more than one red edge
		into $U$ (and similarly for blue), so every vertex not in $U$ sends
		at most two edges into $U$.  It follows that the number of edges
		between $U$ and $V(G) \setminus U$ is at most $2(n - t - 1) <  2n -2t
		-1$, a contradiction.

		By K\"onig's theorem, which states that in bipartite graphs,
		the size of a minimum cover equals the
		size of a maximum matching, it follows that there is a cover $W$ of
		size at most $t$; write
		$W = \{C_1, \ldots, C_t\}$. We claim that $V(G) = C_1 \cup
		\ldots \cup C_t$. Indeed, consider a vertex $u$ and denote its red
		and blue components by $R$ and $B$, respectively. Then $R \cap B \neq
		\emptyset$, hence $RB$ is an edge in $H$, so either $R$ or $B$ is in
		$W$, which implies that $u \in C_1 \cup \ldots \cup C_t$, as
		required. In other words, the vertices of $G$ can be covered by at
		most $t$ monochromatic components.
	\end{proof-claim}

	Finally, we note that the restriction on the minimum degree in
	\Cref{prop:two-colouring} (and therefore
	\Cref{conj:partitioning-two-colours}) cannot be improved.
	The special case of this example, where $t = 2$, appears in
	\cite{bal-debiasio} and shows that the minimum degree condition in
	\Cref{thm:two-cols-partn} is best possible.

	\begin{example} \label{ex:cover-t}
		Let $U$ be a set of size $n \ge t+1$, and let $\{X, A_1, \ldots,
		A_{t+1}\}$ be a partition of $U$, where $|X| = t + 1$ and the sizes
		of $A_1, A_2,\ldots, A_{t+1}$ are as equal as possible; write $X= \{
		x_1, \ldots, x_{t+1} \}$.
		We define a $2$-coloured graph $G$ on vertex set $U$ as follows.
		\begin{itemize}
			\item
				the sets $A_i$ are cliques, and we colour them arbitrarily;
			\item
				we add all possible edges between $A_i$ and $A_{i + 1}$,
				where $i \in [t]$, and
				colour them red if $i$ is odd, and blue otherwise;
			\item
				we add all edges between $x_i$ and $A_i \cup A_{i + 1}$, for
				$i \in [t+1]$ (addition is taken modulo $t + 1$).
				We colour these edges red if $i$ is in $[t]$ and $i$ is odd;
				and blue if $i$ is in $[t]$ and $i$ is even.
				Finally, we colour the edges from $x_{t+1}$ to $A_1$ blue,
				and colour the edges from $x_{t+1}$ to $A_{t+1}$ red if
				$t$ is even and blue if $t$ is odd.
		\end{itemize}
		\begin{figure}[h]
			\centering
			\includegraphics[]{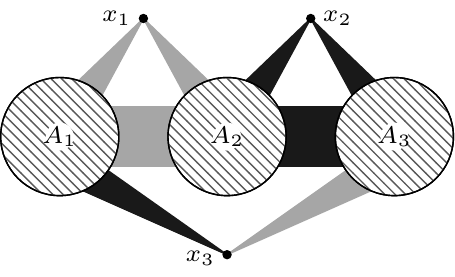}
			\hspace{1cm}
			\includegraphics[]{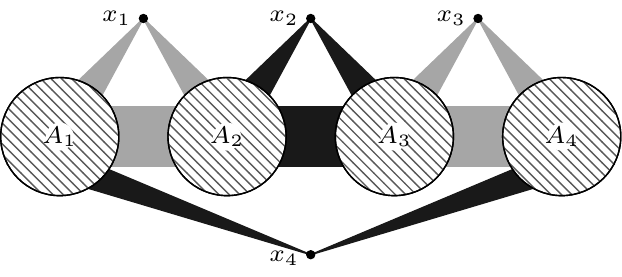}
			\caption{an illustration of \Cref{ex:cover-t} for $t = 2$ and $t
		= 3$ (here grey represents red and black represents blue).}
			\label{fig:ex-t-three}
		\end{figure}
		An easy calculation shows that $G$ has minimum
		degree\footnote{
			In fact, we need to be a bit more careful here. 
			Write $n = a(t +
			1) + r$, where $a$ and $r$ are integers and $0 \le r \le t$.
			We consider two cases: $r < \ceil{(t + 1)/2}$ and $r \ge \ceil{(t
			+ 1)/2}$. In the former case, it is easy to see that $\delta(G) =
			\ceil{(2n - 2t - 1)/(t + 1)} - 1$. In the latter case, note that
			exactly $r$ of the sets $A_i$ have size $a$, and the rest have
			size $a - 1$. Then, again, one can check that $\delta(G) =
			 \ceil{(2n -2t - 1)/(t + 1)} - 1$ if $|A_i| = a$ for every odd $i
			\in [t+1]$ (which is possible as $r \ge (t + 1)/2$).
		}
		$\ceil{(2n -2t - 1)/(t + 1)} - 1$, and that no two vertices in $X$
		belong to the same monochromatic component; in particular, the
		vertices of $G$ cannot be covered by at most $t$ monochromatic
		components.
	\end{example}

\subsection*{Acknowledgements}
	We would like to thank B\' ela Bollob\' as for many valuable comments.
	We would also like to thank Louis DeBiasio for bringing \cite{tuza} to
	our attention.
	The second author would like to acknowledge the support of Dr.~Max
	R\"ossler, the Walter Haefner Foundation and the ETH Zurich Foundation.
	The third author would like to thank Trinity College, Cambridge for
	support.

\bibliographystyle{amsplain}
\bibliography{monocomponent}

\end{document}